\newtheorem{theorem}{Theorem}[section]
\newtheorem{prop}[theorem]{Proposition}
\newtheorem{lemma}[theorem]{Lemma}
\newtheorem{corollary}[theorem]{Corollary}
\theoremstyle{definition}
\newtheorem{definition}[theorem]{Definition}
\theoremstyle{lemma}
\theoremstyle{definition}
\newtheorem{remark}[theorem]{Remark}
\newtheorem{example}[theorem]{Example}
\numberwithin{equation}{section}
\DeclareMathOperator{\Dom}{Dom}
\DeclareMathOperator{\Codom}{Codom}
\DeclareMathOperator{\SM}{\mathcal{S}\mathcal{M}}
\DeclareMathOperator{\Lip}{Lip}
\DeclareMathOperator{\Ker}{Ker}
\DeclareMathOperator{\grad}{grad}
\DeclareMathOperator{\Area}{Area}
\DeclareMathOperator{\Crit}{Crit}
\DeclareMathOperator{\tr}{tr}
\DeclareMathOperator{\id}{id}
\DeclareMathOperator{\orient}{orient}
\DeclareMathOperator{\Imag}{Im}
\DeclareMathOperator{\dvol}{dvol}
\DeclareMathOperator{\vol}{Vol}
\DeclareMathOperator{\HT}{HT}
\DeclareMathOperator{\diver}{div}
\newcommand\bR{{\mathbb R}}
\newcommand{\mlH}{\mathcal{H}}
\newcommand{\ra}{\rightarrow}
\newcommand{\ovra}{\overrightarrow}
\begin{document}
\title{A note on the area and coarea formulas for general volume densities and some applications}
\author{Daniel Cibotaru} 
\address{Universidade Federal do Cear\'a, Fortaleza, CE, Brasil}
\email{daniel@mat.ufc.br}
\author{Jorge de Lira}
\address{Universidade Federal do Cear\'a, Fortaleza, CE, Brasil}
\email{jorge.h.lira@gmail.com}

\subjclass[2010]{Primary 58C35, 49Q15; Secondary 46T12}
\begin{abstract} We present the  area and coarea formulas for Lipschitz maps, valid for general volume densities. As applications we give a short, "euclidean" proof of the anisotropic Sobolev inequality and  describe an anisotropic tube formula for hypersurfaces in $\mathbb{R}^n$. A discussion about the first variation of the anisotropic area is also included.
\end{abstract}
\maketitle
\tableofcontents
\section{Introduction}
The area and coarea formulas are well-established tools in analysis and geometry.  They are far reaching generalizations of the classical change of variables formula and of the Fubini Theorem and were proved by Federer in \cite{Fe}.  One purpose of this note is to take another look at these formulas and  rewrite them for \emph{general} volume densities. While they are not too hard to derive, these theorems  involve a certain formalism for which we have not found any precise reference in the literature. To emphasize their potential use, we consider several analytic and geometric applications such as the Sobolev inequality, the tube formula  for hypersurfaces and the first variation of the area, all of them in the \emph{anisotropic} world. The anisotropic tube formula seems new, while the anisotropic Sobolev inequality has received other proofs, notably one proof  given by Gromov (\cite{MS}) using methods of optimal transport.  The general coarea formula presented in this article allows one essentially to  copy-paste the classical proof  of Federer and Fleming (\cite{FF}) for  the euclidean Sobolev inequality.

The study of the first variation of the anisotropic area has at least two different approaches: the one by Andrews \cite{An} and the one by  Palmer \cite {P} and Koiso \cite{PK} and collaborators. Our  point of view, which is to make use of inner products as little as possible, is closer to Andrews line of investigation. We make a comparison between the two approaches at the end of this article. All these applications that we give are in our view interrelated and we included them all in order to better emphasize the philosophy we mentioned before. 
  
 One ingredient in the proof of the Sobolev inequality is  the isoperimetric inequality which in turn relies on the Brunn-Minkowski inequality. The reader can find a complete proof of the \emph {anisotropic} isoperimetric inequality (the so-called Wulff Theorem) in the context of integral currents in \cite{Ta}. We reproduce here the main idea of this proof, without any reference to currents whatsoever.  Along the way, we found it worthwhile to include  a \emph{geometric proof} of a result that puts the equal sign between the anisotropic outer Minkowski content and the anisotropic area  of the boundary of a $C^2$ manifold (Theorem \ref{anisoMink}). This was for two reasons: one because the  generalized change of variables (Proposition \ref{cov})  shows up and second, because this  lead us immediately to an anisotropic tube formula for hypersurfaces at least when the Wulff shape has a $C^2$ support function.  This result  involves the expected ingredients, namely the symmetric polynomials in the eigenvalues of an appropriately defined  \emph{anisotropic} Weingarten map. To our knowledge, such a tube formula has not appeared yet in the literature.  Inspired by this result we introduce the anisotropic connection and show that the divergence it induces is the right factor in the first variation of the anisotropic area, recovering thus a result of Andrews in \cite{An}. At last, we digress on how this relates to the work of Palmer and Koiso and Palmer (\cite{PK,P}).

As we said, the results presented in this note have some overlap with results published elsewhere. Using fine methods of geometric measure theory the equality of the anisotropic Minkowski content and the anisotropic area was recently proved by Chambolle,  Lisini and Lussardi (see \cite{CLL}) under much weaker regularity  conditions.  The anisotropic Sobolev inequality was first proved by Gromov using ideas of optimal transport (see the Appendix of \cite{MS}). This was vastly generalized by Cordero-Erausquin, Nazaret and Villani in \cite{CENV} using again  optimal transport.

 A coarea formula for real valued maps in Finsler geometry was proven by Shen in \cite{Shen}.  We obtain Shen's theorem  as a corollary of the main result (see Corollary \ref{funcman} and Remark \ref{funcmanrem}).   In their study of rectifiable subsets of metric spaces and Lipschitz maps between them,  Ambrosio and Kirchheim in \cite{AK} present some area and coarea formulas in which the underlying measures  are the Hausdorff measures.  In the reversible Finsler context, the Hausdorff measure is induced by the Busemann volume density.  We work instead with general volume densities.   It turns out that the relevant jacobians and cojacobians are given by relatively simple \emph{algebraic} expressions that involve only the differential of the map and the underlying norms/densities, very much in the spirit of the analogous euclidean objects.   These should be contrasted with the  coarea factors introduced by Ambrosio and Kirchheim which have a less direct definition, a reflection of the fact that they are meant to work in the general context of metric spaces. Here we are interested in manifolds and/or rectifiable sets.

One paper that motivated and influenced us  is the beautiful  article of \'Alvarez-Paiva and Thompson \cite{AT}. One finds there a quite extensive description of the important volume densities in Finsler geometry. The volume densities that appear here in the general area and coarea formulas do not need to arise in some "functorial" way   as  happens for example in the context  of Riemannian  or  Finsler geometry (see  \cite{AT})  in order for the theorems to hold. However, the search for such general formulas was triggered by considering the different notions of volume in Finsler geometry.

The article is structured as follows. We start by presenting the linear picture. The main novelty  is the notion of a \emph{codensity} that is used in the definition of the cojacobian. This is a density on the dual vector space obtained, in a certain sense, by taking the quotient of a  density of complementary dimension on the original vector space and a top-degree density. We next describe the change of variables and the Fubini formula for densities in the smooth ($C^1$) framework. In Section \ref{sec3}, we present the  area and coarea formulas for Lispchitz maps between the euclidean spaces.   We should emphasize  that special attention is paid to orientation issues  whenever the densities involved are not symmetric.

 The proofs of the area and coarea theorems for general densities use their well-known Riemannian analogues of \cite{F}  and some canonical relations between the correction factors derived in Section \ref{sec1}. The whole Section \ref{sec4} is dedicated to the proof of the anisotropic Sobolev inequality together with the aforementioned result about anisotropic Minkowski content. In Section \ref{sec5} we derive the tube formula and inspired by the proof we introduce a natural anisotropic connection on a hypersurface and we show that the first variation of the (anisotropic) area is the integral of the  divergence with respect to this connection. We include a version of the divergence theorem relevant to this framework.

\section{The linear picture}\label{sec1}
Let $V$ be a real vector space of dimension $n$. The cone of simple $k$-multivectors  is the subset $\Lambda^k_sV$ of elements of $\Lambda^kV$  which can be written as a wedge product of $k$-vectors from $V$.
\begin{definition}  A $k$-(volume) density on $V$ is a map $F:\Lambda^k_sV\ra \mathbb{R}_{\geq 0}$ which takes the value $0$ only in $0$ and is homogeneous of degree $1$, i.e.
\[ F(a\xi)=aF(\xi), \quad\quad\forall a\geq 0,\xi\in \Lambda^k_sV
\]
Denote by $\mathscr{D}_k(V)$ the space of $k$-volume densities. We will let $\mathscr{D}_k^+(V)$ denote the symmetric $k$-volume densities, meaning those that also satisfy
\[ F(\xi)=F(-\xi).
\]
\end{definition}
\begin{example} An inner product on $V$ induces norms on all $\Lambda^kV$ and the corresponding norms give volume densities in all degrees.
\end{example}

\begin{example} An $n$-volume density is just a norm on the line $\Lambda^n V$. For this reason we will occasionally use the  symbol $\|\cdot\|$ to denote a $k$-density, even if this might not necessarily arise from a norm on the whole $\Lambda^kV$. 

 Notice that  every $n$-form $\Omega:\Lambda^nV\ra \mathbb{R}$ gives rise to a symmetric $n$-volume density by letting $|\Omega|:=|\cdot|\circ \Omega$. Conversely, a symmetric $n$-density and an orientation on $V$ give rise to an $n$-form.
\end{example}

For the most part in this note we will work with symmetric volume densities.  In order to work with asymmetric volume densities there is one limitation one has to impose  and this  is that one should be working in the category of \emph{oriented} vector spaces. The reason is simple. Take for example a curve in a Finsler manifold. If the Finsler norms are not symmetric then one ends up having a forward length and a possibly different backward length of the same curve, depending on whether one is going along the curve in one direction or in the opposite direction. So one cannot hope to get a change of variables formula for maps that reverse the orientation. This is the same condition one has when building the theory of integration for differential forms: the background manifolds are supposed to be oriented. Most of the definitions and results are stated for symmetric densities, while in subsequent remarks we clarify what are the modifications in the oriented/non-symmetric case.

\begin{definition}\label{jacdef}
Let $V$ and $W$ be two vector spaces and $F\in\mathscr{D}_n^+(V)$ and $G\in\mathscr{D}_n^+(W)$  two symmetric $n$-densities. Suppose $\dim{V}=n$ and let $A:V\ra W$ be a linear map. The jacobian of $A$ with respect to the densities $F$ and $G$ is
\[ J(A)=J(A; F,G)=G(\Lambda^nA(\xi)),
\]
where $\xi\in\Lambda^n V$ is one of the two vectors that satisfies $F(\xi)=1$.  If  $V$ and $W$ are    oriented  then $F$ and $G$ are allowed to be non-symmetric and the definition of the jacobian is modified by requiring that $\xi$ be positively oriented.
\end{definition}

We make two trivial remarks. The jacobian is non-zero only if  $A$ is injective. If $G$ is the restriction of a norm on $\Lambda^nW$ then the jacobian is the norm of the operator $\Lambda^nA$.

\begin{example} If $V$ and $W$ are endowed with inner products that induce the densities $F$ and $G$ respectively then
\[ J(A)=\sqrt{\det{A^*A}}.
\] 
One checks this by considering $A$ to be an isomorphism onto its image.
\end{example}

\begin{example} The Holmes-Thompson volume density of a normed space $V$ is defined by (see \cite{AT})
\begin{equation}\label{HT} \mu(v_1\wedge\ldots\wedge v_n)=\epsilon_n^{-1}\int_{B(V^*)} ~dv_1^*\ldots dv_n^*,
\end{equation}
where $B(V^*)$ is the dual unit ball, $v_1^*,\ldots, v_n^*$ is the dual basis to $v_1,\ldots, v_n$ and $\epsilon_n$ is the volume of the euclidean unit ball. The notation $dv_1^*\ldots dv_n^*$ is a substitute for the Lebesgue measure induced by the basis $\{v_1^*\ldots v_n^*\}$. One can rewrite (\ref{HT}) as:
\[   \mu(v_1\wedge\ldots\wedge v_n)=\epsilon_n^{-1}\int_{B(V^*)} ~dv_1\wedge\ldots\wedge dv_n,
\] 
where now, $dv_1\wedge\ldots\wedge dv_n$ is an element of $V^{**}$  corresponding to $v_1\wedge\ldots\wedge v_n$ under the canonical isomorphism $V\simeq V^{**}$ and the orientation on $V^*$ is the one given by $v_1^*\wedge\ldots\wedge v_n^*$.

 The jacobian of a bijective linear  map $A:V\ra W$ between two normed vector spaces endowed with the Holmes-Thompson volumes is:
\[ J_{\HT}(A)=\left|\frac{\int_{B(W^*)} \Lambda^nA (\xi)}{\int_{B(V^*)}\xi}\right|,
\]
where $\xi$ is any non-zero element of $\Lambda^n V$, considered as an $n$-form over $V^*$. If $W=V$, possibly with a different norm, then this simplifies to:
\[ J_{\HT}(A)=|\det{A}|\frac{\int_{B(W^*)}\xi}{\int_{B(V^*)}\xi}=|\det{A}|\frac{\vol{(B(W^*))}}{\vol{(B(V^*))}}.
\]
\end{example}
\vspace{0.3cm}
In order to define the correction factor used in the coarea formula we need the next elementary result.
\begin{lemma}\label{codens} Let $V$ be a vector space of dimension $n+m$ and let $F\in\mathscr{D}_n^+(V)$ and $\mu\in\mathscr{D}_{n+m}^+(V)$.  Let $v_1^*,\ldots v_m^*$ be linearly independent vectors and choose $w_1^*,\ldots, w_n^*$ such that the set $\{v_1^*\ldots v_{m}^*,w_1^*,\ldots w_n^*\}$ forms  a basis of $V^*$ with dual  basis $ v_1,\ldots v_m, w_1,\ldots, w_n$ of $V$. Then
\[ F^*_{\mu}(v_1^*\wedge\ldots \wedge v_m^*):=\frac{F(w_1\wedge\ldots \wedge w_n)}{ \mu( v_1\wedge \ldots\wedge v_m \wedge w_1\wedge\ldots \wedge w_n)}
\]
is independent of the choice of $w_1^*,\ldots, w_n^*$ and thus induces a well defined $m$-volume density on $V^*$. 
\end{lemma}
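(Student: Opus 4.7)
The plan is to show that any change of the complementary basis $\{w_j^*\}$ scales numerator and denominator by the same factor, and then that the resulting function has the right homogeneity and descends to simple $m$-multivectors. Let $\{v_1^*,\ldots,v_m^*,\tilde w_1^*,\ldots,\tilde w_n^*\}$ be a second basis of $V^*$ agreeing with the first on $v_1^*,\ldots,v_m^*$, and write
\[ \tilde w_j^* = \sum_{i=1}^{n} a_{ij}\,w_i^* + \sum_{k=1}^{m} b_{kj}\,v_k^*, \qquad A:=(a_{ij}). \]
The duality relations $\langle\tilde w_j^*,\tilde w_l\rangle=\delta_{jl}$ and $\langle v_k^*,\tilde w_l\rangle=0$ give $\tilde w_l=\sum_p ((A^{-1})^{T})_{pl}\,w_p$, and $\langle\tilde w_j^*,\tilde v_k\rangle=0$ (together with $\langle v_p^*,\tilde v_k\rangle=\delta_{pk}$) gives $\tilde v_k = v_k + \sum_i \beta_{ik}\,w_i$ for certain coefficients $\beta_{ik}$ (depending on $B=(b_{kj})$ and $A$).

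Next I would perform two short exterior-algebra computations. The first is immediate:
\[ \tilde w_1\wedge\cdots\wedge\tilde w_n = (\det A)^{-1}\,w_1\wedge\cdots\wedge w_n. \]
For the second, observe that each correction $\sum_i\beta_{ik}w_i$ lies in $\Span(w_1,\ldots,w_n)=\Span(\tilde w_1,\ldots,\tilde w_n)$, so these corrections wedge to zero against $\tilde w_1\wedge\cdots\wedge\tilde w_n$; consequently
\[ \tilde v_1\wedge\cdots\wedge\tilde v_m\wedge\tilde w_1\wedge\cdots\wedge\tilde w_n = (\det A)^{-1}\,v_1\wedge\cdots\wedge v_m\wedge w_1\wedge\cdots\wedge w_n. \]
Applying the symmetric densities $F$ and $\mu$ together with positive homogeneity of degree one, both $F(\tilde w_1\wedge\cdots\wedge\tilde w_n)$ and $\mu(\tilde v_1\wedge\cdots\wedge\tilde w_n)$ are multiplied by the single factor $|\det A|^{-1}$, so the ratio defining $F^*_\mu$ is unchanged. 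This proves independence of $\{w_j^*\}$.

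Finally, to see that $F^*_\mu$ is a well-defined $m$-density on $V^*$ I would check two last items. For positive homogeneity, rescaling $v_1^*$ by $c>0$ sends $v_1\mapsto c^{-1}v_1$ while leaving the remaining dual vectors unchanged, so the denominator scales by $c^{-1}$ and the numerator is untouched. For independence of the basis representing a given simple wedge $v_1^*\wedge\cdots\wedge v_m^*$, a change of basis within $\Span(v_1^*,\ldots,v_m^*)$ preserving the wedge is by a matrix of determinant one; the same duality argument shows it fixes the complementary $w_i$ and produces a $v$-wedge of the same value. The only delicate point in the whole argument is that the coefficients $b_{kj}$ do not survive in the wedge computation, and this is exactly the exterior-algebra reason highlighted above; no serious obstacle arises. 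In the oriented/non-symmetric case one works with positively oriented bases and $\det A>0$, and the proof is otherwise identical.
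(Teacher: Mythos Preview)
Your proof is correct and follows essentially the same approach as the paper: both reduce the independence claim to the observation that the relevant change of basis in $V$ is block upper-triangular, so numerator and denominator scale by the same determinant factor. The paper packages this slightly differently by introducing the quotient space $U=V/W$ and an intermediate density $H$ on it, but the underlying computation is the one you carry out directly.
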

\begin{proof}  Let $U^*:=\langle v_1^*,\ldots, v_m^*\rangle\subset V^*$ with dual space $U$. We obviously have an exact sequence:
\[ 0\ra W\ra V\ra U\ra 0,
\]
where $W$ is the kernel of the projection $V\ra U$. The $n$-dimensional subspace $W$ is endowed with an $n$- density by restricting  $F$. Together with the $n+m$ density on $V$ one gets an $m$-density $H$ on $U$: let $u_1,\ldots ,u_m$ be a basis of $U$ and choose  some lifts of the $u$'s  in $V$, call them  $e_1,\ldots ,e_m$  and a basis $\{f_1,\ldots f_n\}$ of $W$. Then
\[ H(u_1\wedge\ldots \wedge u_m):=\frac{\mu(e_1\wedge \ldots \wedge e_m\wedge f_1\wedge\ldots f_n)}{F(f_1\wedge\ldots \wedge f_n)}.
\]
This definition does not depend on the basis $f_1,\ldots, f_n$ of $W$ or on the lifts $e_1,\ldots e_m$. Indeed, if one makes  different choices $\{e_1,'\ldots,e_m'\}$  and $\{f_1',\ldots,f_n'\}$ then the change of basis matrix going from $\{e_1\ldots e_m, f_1,\ldots f_n\}$ to $\{e_1'\ldots e_m', f_1',\ldots f_n'\}$   has the block decomposition:
\[ X=\left(\begin{array}{cc} A & B \\ C&D\end{array}\right) \quad\mbox{ with } \quad A=1 \mbox{ and } C=0
\]
with $D$ the matrix obtained by going from $\{f_1,\ldots, f_n\}$  to $\{f_1',\ldots, f_n'\}$. Therefore $\det{X}=\det{D}$. 

From $H$ we get an $n$-density on $U^*$:
\[H^*(u_1^*\wedge\ldots \wedge u_m^*):=\frac{1}{H (u_1\wedge\ldots \wedge u_m)}
\]
Notice now that   $W=\langle w_1,\ldots, w_n \rangle$ and that $v_1, \ldots v_m\in V$ are lifts of the vectors in $U$ which represent the dual basis to $v_1^*,\ldots, v_m^*$. Thus $H^*=F^*_{\mu}\bigr|_{U^*}$.
\end{proof}
\begin{definition} The $m$-densities on $V^*$  resulting by taking the "quotient" of an $n$-density $F$ on $V$  and an $(n+m)$-density $\mu$ as in  Lemma \ref{codens} will be called $m$-codensities on $V$.

If $\dim{V}=m$ then the codensity associated to $\mu\in\mathscr{D}_m^+(V)$ is 
\[ \mu^*(v_1^*\wedge\ldots \wedge v_{m}^*)=\frac{1}{\mu(v_1\wedge\ldots\wedge v_{m})}.
 \]
 The space of all $m$-codensities on $V$ corresponding to the same top degree volume density $\mu$ will be denoted by $\mathscr{D}^m_{\mu}(V)$. 
\end{definition}
\begin{remark}\label{Fns} We will allow $F$ to be non-symmetric while keeping $\mu$ symmetric but in that case $V$ will be oriented and the basis $v_1,\ldots, v_m,$ $w_1,\ldots w_n$ is required to be positively oriented in the Definition-Lemma \ref{codens} which goes through with some obvious modifications.
\end{remark}
\begin{remark}\label{top dens}   A codensity should not be confused with  a dual density. Given $F\in\mathscr{D}_n(V)$ one defines $F^{\sharp}\in\mathscr{D}_n(V^*)$ by putting
\[ F(\omega)^{\sharp}:=\sup_{\stackrel{F(\xi)=1}{\xi\mbox{ simple}}}|\omega(\xi)|
\]

They are the same though in top dimension, i.e. if $n=\dim{V}$ and $\mu\in\mathscr{D}_n(V)$, then $\mu^{\sharp}=\mu^*$.
  \qed
\end{remark} 

It is useful to have another description of codensities. Fix a  symmetric density $\mu$ of top degree $n+m$ and let $\Omega$ be a top degree form on $V$ such that 
\[ \mu=|\Omega|.
\]

Notice that $\Omega$ induces a natural Hodge isomorphism of vector spaces.
\begin{equation}\label{iota} \iota_{\Omega}:\Lambda^nV\ra\Lambda^{m}V^*,\quad\quad\quad\iota_{\Omega}(\theta)=\{\eta\ra\Omega(\theta\wedge \eta)\}.
\end{equation}
This  coincides with the classical Hodge duality, once one picks an inner product on $V$ whose volume form coincides with $\Omega$ and identifies $\Lambda^m V^*$ with $\Lambda^m V$ via the inner product.

We will assume that $\Omega$ is positively oriented if $V$ comes with an orientation.

For reasons which have to do with orientation conventions   we use
\[\iota_{\Omega}^{*}:\Lambda^nV\ra\Lambda^{m}V^*, \quad\quad\quad\iota_{\Omega}^*(\theta)=\{\eta\ra\Omega(\eta\wedge \theta)\}.
\] 
Notice that $\iota_{\Omega}$ and $\iota_{\Omega}^{*}$ are also  isomorphisms between the cones of simple vectors. Indeed, if $w_1\wedge\ldots\wedge w_n\in\Lambda^nV$ then  choose $v_1,\ldots ,v_m\in V$ such that
\begin{equation}\label{orbasis} \Omega( v_1\ldots\wedge v_m\wedge w_1\wedge\ldots\wedge w_n)= 1
\end{equation}
It is then straightforward to check that $\iota_{\Omega}^*(w_1\wedge\ldots \wedge w_n)= v_1^*\wedge\ldots\wedge v_m^*$, where $v_1^*,\ldots, v_m^*$ are those vectors in the dual  basis of $\{w_1,\ldots ,w_n,v_1,\ldots , v_m\}$ which vanish on $\langle w_1,\ldots w_n\rangle$. On the other hand,
 \[\iota_{\Omega}(w_1\wedge\ldots\wedge w_n)=(-1)^{mn}v_1^*\wedge\ldots\wedge v_m^*.\]
 
 Notice that  in the presence of an orientation, it makes more sense to write $F^*_{\Omega}$ for the codensity resulting from a density $F$, as discussed in Remark \ref{Fns}. We will keep the notation $F^*_{|\Omega|}$ to suggest that no orientation is present. 

\begin{prop} \label{altcodens} Let $F\in\mathscr{D}_n^+(V)$ and let $F_{|\Omega|}^*$ be the induced $m$-codensity. Then
\[ F_{|\Omega|}^*=F\circ( \iota_{\Omega}^*)^{-1}.
\]
If $V$ is oriented then
\[ F_{\Omega}^*=F\circ(\iota_{\Omega}^*)^{-1}.
\]
\end{prop}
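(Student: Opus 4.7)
The plan is to verify the identity pointwise on a simple $m$-covector $v_1^*\wedge\ldots\wedge v_m^*$. Following the setup of Lemma \ref{codens}, I would complete $v_1^*,\ldots,v_m^*$ to a basis $\{v_1^*,\ldots,v_m^*,w_1^*,\ldots,w_n^*\}$ of $V^*$ with dual basis $\{v_1,\ldots,v_m,w_1,\ldots,w_n\}$ of $V$. With this data both sides of the claimed equation can be written down explicitly.

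First I would compute $\iota_\Omega^*(w_1\wedge\ldots\wedge w_n)$. By definition this is the functional $\eta\mapsto \Omega(\eta\wedge w_1\wedge\ldots\wedge w_n)$ on $\Lambda^mV$. Evaluating on the basis of $\Lambda^mV$ induced by $\{v_1,\ldots,v_m,w_1,\ldots,w_n\}$, any term involving a $w_j$ vanishes because $w_j$ would repeat, so the only nonzero value is
\[
\iota_\Omega^*(w_1\wedge\ldots\wedge w_n)(v_1\wedge\ldots\wedge v_m)=\Omega(v_1\wedge\ldots\wedge v_m\wedge w_1\wedge\ldots\wedge w_n)=:c.
\]
Since $v_1^*\wedge\ldots\wedge v_m^*$ is the dual basis vector taking value $1$ on $v_1\wedge\ldots\wedge v_m$ and vanishing on the other basis $m$-vectors of $\Lambda^mV$, this gives the clean identity
\[
\iota_\Omega^*(w_1\wedge\ldots\wedge w_n)=c\,v_1^*\wedge\ldots\wedge v_m^*.
\]
Inverting, $(\iota_\Omega^*)^{-1}(v_1^*\wedge\ldots\wedge v_m^*)=c^{-1}w_1\wedge\ldots\wedge w_n$, which is indeed simple, as promised by the excerpt.

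Now I would plug this into $F$. Using homogeneity together with symmetry of $F$ (so that the sign of $c$ can be absorbed),
\[
F\bigl((\iota_\Omega^*)^{-1}(v_1^*\wedge\ldots\wedge v_m^*)\bigr)=|c|^{-1}F(w_1\wedge\ldots\wedge w_n).
\]
On the other hand, $\mu=|\Omega|$ gives $\mu(v_1\wedge\ldots\wedge v_m\wedge w_1\wedge\ldots\wedge w_n)=|c|$, so comparing with the defining formula for the codensity in Lemma \ref{codens} yields exactly $F^*_{|\Omega|}(v_1^*\wedge\ldots\wedge v_m^*)=|c|^{-1}F(w_1\wedge\ldots\wedge w_n)$. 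Hence the two expressions agree.

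For the oriented statement I would repeat the same computation but require the completed basis $\{v_1,\ldots,v_m,w_1,\ldots,w_n\}$ to be positively oriented, as permitted by Remark \ref{Fns}. Since $\Omega$ is positively oriented, the scalar $c$ is then positive, so the step that absorbed $|c|$ into homogeneity of $F$ goes through without needing the symmetry hypothesis, and the same chain of equalities gives $F^*_\Omega=F\circ(\iota_\Omega^*)^{-1}$. The main thing to watch is the orientation/sign bookkeeping---in particular noting that the convention $\iota_\Omega^*$ (wedging $\eta$ on the left of $\theta$) is precisely what makes $c$ agree with the normalization (\ref{orbasis}) used in the excerpt, so no stray sign $(-1)^{mn}$ appears, in contrast with the formula for $\iota_\Omega$.
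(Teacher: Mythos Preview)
Your proof is correct and follows essentially the same route as the paper's: both compute $\iota_\Omega^*(w_1\wedge\ldots\wedge w_n)$ explicitly as a scalar multiple of $v_1^*\wedge\ldots\wedge v_m^*$ and then compare with the defining formula from Lemma~\ref{codens}. The only cosmetic difference is that the paper normalizes the basis so that $c=1$ via (\ref{orbasis}) and verifies $F_{|\Omega|}^*\circ\iota_\Omega^*=F$, whereas you track the scalar $c$ throughout and verify the inverse composition directly.
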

\begin{proof} Using the notation preceding the proposition we check that $F=F_{|\Omega|}\circ \iota_{\Omega}^*$:
\[F_{|\Omega|}^*(\iota_{\Omega}^*(w_1\wedge\ldots\wedge w_n))=F_{|\Omega|}^*(v_1^*\wedge\ldots\wedge v_m^*)=\frac{F(w_1\wedge\ldots\wedge w_n)}{|\Omega|( v_1\ldots\wedge v_m\wedge w_1\wedge\ldots\wedge w_n)}\]
and the result follows by (\ref{orbasis}). The same proof works for the second part.
\end{proof}
\begin{remark} Notice that for a non-symmetric $F$ it makes all the difference that one uses $\iota_{\Omega}^*$ instead of $\iota_{\Omega}$. Obviously, all this is  related to the choice of the order in which to write $v_1,\ldots, v_m,w_1,\ldots w_n$ in  Lemma \ref{codens}. This choice of order in turn is  related  to considerations concerning integration over the fiber and Fubini formula for forms in which the orientation on the "top" space is the orientation of the base space "tensored" with the orientation on the fiber, the so-called fiber-last convention. 
\end{remark}

\begin{definition} Let $V$ and $W$ be two vector spaces of dimension $n+m$ and $m$ respectively.  Suppose that $V$ is endowed with a $F\in\mathscr{D}_n^+(V)$ and $\mu\in\mathscr{D}_{n+m}^+(V)$, while $W$ has  $\nu\in\mathscr{D}_m^+(V)$. Let $A:V\ra W$ be a linear map. The cojacobian of $A$ with respect to $F,\nu$ and $\mu$ is:
\[ C(A)=C(A;\nu^*,F_{\mu}^*):=F_{\mu}^*(\Lambda^nA^*(\omega))=J(A^*; \nu^*,F_{\mu}^*), 
\]
where $\omega\in\Lambda^m W^*$ is one of the two vectors of norm $1$, i.e $\nu^*(\omega)=1$. 
\end{definition}

When $V$ and $W$ have inner products and all densities involved come from these then one recovers the usual definition of the cojacobian:
\[ C(A)=\sqrt{\det{AA^*}}.
\]

\begin{remark} If $V$ and $W$ are oriented then we will allow $F$ to be non-symmetric but $\mu$ and $\nu$ are kept symmetric. In that case the definition of the cojacobian is modified by requiring that $\omega$ be positively oriented and keeping in mind the different definition for $F^*_{\mu}$ (Remark \ref{Fns}).
\end{remark}

\begin{example}\label{excodens1}  Notice first that the cojacobian is zero when $A$ is not surjective.

Suppose therefore that $A$ is surjective and choose $w_1,\ldots, w_n$ a basis of $\Ker A$. Let $v_1,\ldots, v_m\in V$ be vectors such that $Av_1\ldots ,Av_m$ is a basis in $W$. Then 
\[C(A;\nu^*,F_{\mu}^*)=\frac{F(w_1\wedge\ldots \wedge w_m)\cdot\nu(Av_1\wedge\ldots\wedge Av_n)}{\mu(v_1\wedge\ldots \wedge v_n\wedge w_1\wedge\ldots w_m)}.
\] 
When both spaces are  oriented then $v_1,\ldots,v_m$ are chosen such that $Av_1\ldots ,Av_m$ form a positively oriented basis of $W$ and  $w_1,\ldots w_n$ such that $v_1,\ldots,v_m,w_1\ldots w_n$ form a positively oriented basis of $V$. This is the same thing as saying that $w_1,\ldots,w_n$ form an oriented basis of the "fiber" $\Ker A$ where the fiber is given the induced orientation.
\end{example}
\begin{example}\label{ex2} In Section \ref{sec4} we will treat the case $m=1$  in more detail. We will be concerned with an oriented  vector space $V$ of dimension $n+1$ with a volume form $\Omega:V\ra\ \bR$ and a codimension $1$-convex density, meaning a function $F:\Lambda^{n}V\ra \mathbb{R}$ such that $F$ is positively homogeneous and convex, also called an anisotropic functional.  One  defines the convex dual functional as in Prop. \ref{altcodens}) $\bar{F}^*_{\Omega}:V^*\ra \mathbb{R}$   and $\bar{F}:V\ra \bR$ by duality:
\[ \bar{F}(v)=\sup_{\bar{F}^*_{\Omega}(f)\leq 1} f(v).
\]
One checks easily that
\[ C(f, dt, F_{\Omega}^*)=\bar{F}^*(f). 
\]
If $\bar{F}$ is strictly convex, one  defines the Legendre isomorphism $V^*\setminus\{0\}\ra V\setminus\{0\}$ by associating to $f$ the unique vector $v_f$ such that $\bar{F}^*(f)=\bar{F}(v_f)$ and
\[ f(v_f/\bar{F}(v_f))=\sup_{\bar{F}(w)=1} f(w).
\]
The vector $v_f$ is an example of anisotropic normal to the hyperplane $\Ker f$. When $f$ happens to be the differential of some $C^1$-function $g:V\ra \mathbb{R}$ at a point $p$, then $v_f$ is usually called the Finslerian gradient of $g$ at $p$ and is denoted $\nabla g(p)$. We therefore have
\[ C(dg, dt, F_{\Omega}^*)=\bar{F}^*(dg)=\bar{F}(\nabla g). 
\]
\end{example}

\vspace{0.3cm}
In order for the next result not to lead to some vacuous statements we fix some conventions. Thus the jacobian of a map with respect to two densities is well defined  only when the following dimensional condition is satisfied: 
 \[\deg{F}=\deg{G}=\dim{\Dom(A)}.\]
   It is convenient to set $J(A;F,G)=0$ if this does not hold. Similarly for the cojacobian $C(A;\nu^*,F_{\mu}^*)$ one needs 
   \[\deg{\nu}=\deg{\mu}-\deg{F}=\dim{\Codom{(A)}}.\]
    We set $C(A;\nu^*,F_{\mu}^*)=0$ otherwise. 
    
 The following straightforward properties of the jacobian and cojacobian are needed for the proof of the area and coarea formulas. We will next  that all densities are symmetric.
\begin{prop} \label{properties} Let $V,W,Z$ be three vector spaces and $A:V\ra W$ and $T:W\ra Z$ be linear maps. 
\begin{itemize}
\item[(a)] If $\dim V=\dim W=n$, $\mu\in\mathscr{D}_n^+(V)$ and $\nu\in\mathscr{D}_n^+(W)$ then
\[ J(A;\mu,\nu)=C(A;\nu^*, \mu^*)\]
\item[(b)] If  $\dim{V}=n$,  $\mu\in\mathscr{D}_n^+(V)$, $G\in\mathscr{D}_n^+(W)$, $H\in\mathscr{D}_n^+(Z)$   then
 \[ J(T\circ A; \mu,H)=J(A;\mu, G)\cdot J\left(T\bigr|_{\Imag{A}};G,H\right).
\]
\item[(c)] If $\dim{Z}=n$, $\lambda\in\mathscr{D}_n^+(Z)$,  $F_{\mu}^*\in\mathscr{D}^n_{\mu}(V)$, $G_{\nu}^*\in\mathscr{D}^n_{\nu}(W)$  then
  \[ C(T\circ A;\lambda^*,F_{\mu}^*)=C(T; \lambda^*,G_{\nu}^*)\cdot J\left(A^*\bigr|_{\Imag{T}^*}; G_{\nu}^*,F_{\mu}^*\right).
\]
\item[(d)] If $\dim W=\dim Z=n$, $\nu\in\mathscr{D}_n^+(W)$, $\lambda\in\mathscr{D}_n^+(Z)$,  $F_{\mu}^*\in\mathscr{D}^n_{\mu}(V)$  then:
\[ C(T\circ A;\lambda^*,F_{\mu}^*)=C(T; \lambda^*,\nu^*)\cdot C(A;\nu^*, F_{\mu}^*)=J(T;\nu, \lambda)\cdot C(A;\nu^*, F_{\mu}^*)
\]
\item[(e)] If  $\dim{V}=\dim{W}=n$,  $\dim{Z}=k$,  $\mu\in\mathscr{D}_{n}^+(V)$, $\nu\in\mathscr{D}_n^+(W)$, $F\in\mathscr{D}_{n-k}^+(V)$, $G\in\mathscr{D}_{n-k}(W)$, $\lambda\in\mathscr{D}_k(Z)$, $A$ is an isomorphism and $T$ is surjective then
\[ J(A\bigr|_{\Ker{T\circ A}}; F,G)\cdot C(T\circ A;\lambda^*,F_\mu^*)=C(T;\lambda^*,G_{\nu}^*)\cdot J(A;\mu,\nu),
\]
where by convention the jacobian of the unique map on the null vector space is set to be equal to $1$.
\end{itemize}
\end{prop}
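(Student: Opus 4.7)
The five claims split into two groups. Parts (a)--(d) are essentially structural and follow quickly from the definitions combined with the functoriality $\Lambda^n(T \circ A) = \Lambda^n T \circ \Lambda^n A$ of the exterior power; part (e) requires a short but careful bookkeeping using the explicit quotient formula of Example \ref{excodens1}.

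For (a), fix bases $v_1,\ldots,v_n$ of $V$ and $w_1,\ldots,w_n$ of $W$ and write $A$ as a matrix $M=(m_{ji})$. The top-dimension identity $\mu^*(v_1^*\wedge\cdots\wedge v_n^*)=1/\mu(v_1\wedge\cdots\wedge v_n)$ (Remark \ref{top dens}) together with $\Lambda^n A^*(w_1^*\wedge\cdots\wedge w_n^*)=\det(M)\,v_1^*\wedge\cdots\wedge v_n^*$ shows that both $J(A;\mu,\nu)$ and $C(A;\nu^*,\mu^*)$ equal $|\det M|\,\nu(w_1\wedge\cdots\wedge w_n)/\mu(v_1\wedge\cdots\wedge v_n)$. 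For (b), take $\xi\in\Lambda^n V$ with $\mu(\xi)=1$: when $A$ is injective, $\eta:=\Lambda^n A(\xi)$ is a nonzero simple vector in $\Lambda^n(\Imag A)$ with $G(\eta)=J(A;\mu,G)$, and $\eta/G(\eta)$ is the normalized vector against which $J(T|_{\Imag A};G,H)$ is evaluated; the product formula is then $\Lambda^n(T\circ A)=\Lambda^n T\circ\Lambda^n A$ applied to $\xi$. When $A$ fails to be injective, $\Lambda^n A$ vanishes on simple vectors, $T\circ A$ also has rank less than $n$, and both sides are zero by the stipulated convention.

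Claims (c) and (d) follow by dualization. Using $C(A;\nu^*,F_\mu^*)=J(A^*;\nu^*,F_\mu^*)$ from the very definition of the cojacobian, rewrite $C(T\circ A;\lambda^*,F_\mu^*)=J(A^*\circ T^*;\lambda^*,F_\mu^*)$ and apply (b) to the composition of duals, with common domain dimension $n=\dim Z^*$. Whenever $T$ is surjective (the case (c) is interested in), $T^*$ is injective, so $\Imag T^*\subset W^*$ has full dimension $n$ and the relevant restriction is $A^*|_{\Imag T^*}$; when $T$ is not surjective every cojacobian in sight vanishes. Part (d) is the special case of (c) in which $\dim W=n$, so the auxiliary codensity $G_\nu^*$ may be taken to be $\nu^*$ itself, and $\Imag T^*=W^*$ because $T$ between $n$-dimensional spaces is surjective exactly when its jacobian is nonzero; the second equality in (d) is then (a) applied to $T$.

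For (e), we argue explicitly. Since $A$ is an isomorphism and $T$ is surjective, so is $T\circ A$, and $A$ restricts to an isomorphism $\Ker(T\circ A)\to\Ker T$. Choose a basis $w_1,\ldots,w_{n-k}$ of $\Ker(T\circ A)$ and complete it to a basis of $V$ by vectors $v_1,\ldots,v_k$ whose images $(T\circ A)v_1,\ldots,(T\circ A)v_k$ form a basis of $Z$; applying $A$ transports everything to an analogous basis of $W$. Example \ref{excodens1} then yields
\[
C(T\circ A;\lambda^*,F_\mu^*)=\frac{F(w_1\wedge\cdots\wedge w_{n-k})\,\lambda(TAv_1\wedge\cdots\wedge TAv_k)}{\mu(v_1\wedge\cdots\wedge v_k\wedge w_1\wedge\cdots\wedge w_{n-k})},
\]
and the analogous formula for $C(T;\lambda^*,G_\nu^*)$ with $A v_i$'s and $A w_j$'s in place of $v_i$'s and $w_j$'s, while Definition \ref{jacdef} expresses $J(A;\mu,\nu)$ and $J(A|_{\Ker(T\circ A)};F,G)$ as the corresponding ratios of $\nu$- and $G$-values over $\mu$- and $F$-values on the same wedge products. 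Both sides of the claimed identity then collapse to the common expression $G(Aw_1\wedge\cdots\wedge Aw_{n-k})\,\lambda(TAv_1\wedge\cdots\wedge TAv_k)/\mu(v_1\wedge\cdots\wedge w_{n-k})$ after cancelling the $F(w_1\wedge\cdots\wedge w_{n-k})$ (resp.\ $\nu(Av_1\wedge\cdots\wedge Aw_{n-k})$) factors. The degenerate case $k=n$ has trivial kernel, and the convention $J(\text{null map})=1$ reduces the statement to (a) and (b) applied to the composition $T\circ A$ of isomorphisms. The only real obstacle throughout is notational: one must track carefully the degrees of $F$, $G$, the underlying codensity indices in $\mathscr{D}^k_\mu$, and the vanishing conventions imposed when the dimensional constraints fail.
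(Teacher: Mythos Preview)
Your proof is correct, and for parts (a)--(d) it proceeds essentially along the same lines as the paper: the only cosmetic difference is in (a), where you compute both sides directly via a matrix representation while the paper first notes $J(A)J(A^{-1})=1$ and then identifies $C(A)=1/J(A^{-1})$ through dual bases.

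For part (e) your route is genuinely different. The paper does not compute directly; instead it isolates a separate factorization lemma: for an isomorphism $A:V\to W$ and a subspace $U\subset W$, one has
\[
J(A)=J\bigl(A\bigr|_{A^{-1}(U)}\bigr)\cdot J\bigl(A^*\bigr|_{U^{\perp}}\bigr),
\]
with the right-hand jacobian computed via the induced codensities. Applying this with $U=\Ker T$ (so that $U^{\perp}=\Imag T^*$) and combining with (c) and (a) yields (e). Your approach, by contrast, writes out every jacobian and cojacobian in the claimed identity explicitly via Example~\ref{excodens1} on a single well-chosen basis of $V$ transported to $W$ by $A$, and then cancels. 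Your argument is more self-contained and arguably more transparent for this one statement; the paper's lemma records a standalone structural fact about how the jacobian of an isomorphism factors through a subspace and its annihilator, which is a cleaner formulation even though it is not reused elsewhere.
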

\begin{proof} $(a)$ It is easy to check that $J(A)J(A^{-1})=1$. Now if $\{w_1,\ldots, w_n\}$ is a basis of $W$, then $\{A^*w_1^*,\ldots, A^*w_n^* \}$ and $\{A^{-1}w_1,\ldots, A^{-1}w_n\}$ are dual basis of $V^*$ and $V$ respectively. Clearly $\nu(w_1\wedge\ldots\wedge w_n)=1\Leftrightarrow\nu^*(w_1^*\wedge\ldots w_n^*)=1$. Therefore
\[ C(A)=\mu^*(A^*w_1^*\wedge\ldots\wedge A^*w_n^*)=\frac{1}{\mu(A^{-1}w_1\wedge\ldots \wedge A^{-1}w_n)}=\frac{1}{J(A^{-1})}=J(A).
\]

$(b)$ If $A$ is not injective both sides are zero. For $A$ injective just choose a basis.

$(c)$ If $T$ is not surjective both sides are zero. Either way, one applies $(b)$  to $T^*$ and $A^*$.

$(d)$ If $T$ is not an isomorphism then $T$ is not surjective and both sides are zero.  The result follows directly from $(c)$ and $(a)$.

$(e)$ This follows from $(c)$, $(a)$ and the next lemma applied to $U=\Ker{T}$. Notice that $\Imag{T}^*=U^{\perp}$.
\end{proof}

\begin{lemma} Let $A:V\ra W$ be an isomorphism of vector spaces and $U\subset W$ a vector subspace. Let $U^{\perp}:=\{\alpha\in W^*~|~\alpha\bigr|_U\equiv 0\}$. Suppose both $V,W$ are endowed with top degree densities and also with densities of degree equal to $\dim{U}$. Then
\[ J(A)=J\left(A\bigr|_{A^{-1}(U)}\right)J\left(A^*\bigr|_{U^{\perp}}\right),
\]
where $J(A^*\bigr|_{U^{\perp}})$ is computed using the induced $(\dim{W}-\dim{U})$-codensities on $V$ and $W$.
\end{lemma}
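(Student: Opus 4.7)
The plan is to unpack both sides of the identity by computing the three jacobians with respect to an adapted pair of bases and then using the explicit codensity formula from Lemma \ref{codens}. Set $N=\dim V=\dim W$ and $k=\dim U$. Pick a basis $w_1,\ldots,w_k$ of $U$, extend to a basis $w_1,\ldots,w_N$ of $W$, and set $e_i:=A^{-1}(w_i)$, which yields a basis $e_1,\ldots,e_k$ of $A^{-1}(U)$ and a basis $e_1,\ldots,e_N$ of $V$. The dual bases $\{w_i^*\}\subset W^*$ and $\{e_i^*\}\subset V^*$ satisfy $A^*(w_i^*)=e_i^*$ because $w_i^*(Ae_j)=w_i^*(w_j)=\delta_{ij}$. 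Consequently $A^*$ sends the basis $w_{k+1}^*,\ldots,w_N^*$ of $U^{\perp}$ to the basis $e_{k+1}^*,\ldots,e_N^*$ of $A^{-1}(U)^{\perp}$, and these are legitimate $(N-k)$-dimensional domains/codomains for the third jacobian.

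Writing $\mu,\nu$ for the top densities on $V,W$ and $F,G$ for the $k$-densities, a direct application of Definition \ref{jacdef} gives
\begin{align*}
J(A) &= \frac{\nu(w_1\wedge\cdots\wedge w_N)}{\mu(e_1\wedge\cdots\wedge e_N)}, \\
J\bigl(A\bigr|_{A^{-1}(U)}\bigr) &= \frac{G(w_1\wedge\cdots\wedge w_k)}{F(e_1\wedge\cdots\wedge e_k)}.
\end{align*}
For the third factor, applying Lemma \ref{codens} with the basis $\{e_{k+1}^*,\ldots,e_N^*,e_1^*,\ldots,e_k^*\}$ of $V^*$ (and its dual basis $\{e_{k+1},\ldots,e_N,e_1,\ldots,e_k\}$ of $V$) gives
\[
F^*_\mu(e_{k+1}^*\wedge\cdots\wedge e_N^*)=\frac{F(e_1\wedge\cdots\wedge e_k)}{\mu(e_{k+1}\wedge\cdots\wedge e_N\wedge e_1\wedge\cdots\wedge e_k)},
\]
together with the analogous formula for $G^*_\nu(w_{k+1}^*\wedge\cdots\wedge w_N^*)$. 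Symmetry of $\mu$ and $\nu$ allows us to reorder the wedge factors in the denominators to $\mu(e_1\wedge\cdots\wedge e_N)$ and $\nu(w_1\wedge\cdots\wedge w_N)$, so that
\[
J\bigl(A^*\bigr|_{U^{\perp}}\bigr)=\frac{F^*_\mu(e_{k+1}^*\wedge\cdots\wedge e_N^*)}{G^*_\nu(w_{k+1}^*\wedge\cdots\wedge w_N^*)}=\frac{F(e_1\wedge\cdots\wedge e_k)\,\nu(w_1\wedge\cdots\wedge w_N)}{G(w_1\wedge\cdots\wedge w_k)\,\mu(e_1\wedge\cdots\wedge e_N)}.
\]
Multiplying the two displayed jacobian expressions cancels the $F$ and $G$ factors and leaves precisely the formula for $J(A)$, proving the claim.

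The main obstacle is purely a matter of bookkeeping: one has to match the ordering of wedge factors that appears in the codensity formula (with the $v_i^*$'s first and the $w_j^*$'s second) to the natural ordering in the top densities $\mu(e_1\wedge\cdots\wedge e_N)$ and $\nu(w_1\wedge\cdots\wedge w_N)$. Symmetry of the top densities is what makes this reordering cost-free; in the oriented/non-symmetric setting hinted at in Remark \ref{Fns}, the same argument goes through provided the auxiliary basis $\{e_1,\ldots,e_N\}$ is chosen positively oriented so that the relevant sign of the permutation is $+1$.
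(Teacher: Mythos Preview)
Your proof is correct and follows essentially the same approach as the paper: both pick adapted dual bases (using that the dual basis to $\{A^*w_i^*\}$ is $\{A^{-1}w_i\}$) and compute the three jacobians directly from the codensity formula of Lemma~\ref{codens}. The only cosmetic difference is that the paper indexes so that the first block of dual vectors spans $U^{\perp}$ and then passes through $J(A^{-1})$ and $J(A^{-1}\bigr|_U)$ at the end, whereas you index $U$ first and write the jacobians as ratios from the start; the algebra is identical.
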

\begin{proof} Let $u_1^*,\ldots, u_k^*$ be a basis of $U^{\perp}$ which we complete to a basis $u_1^*,\ldots, u_n^*$ of $W^*$. It follows quickly that $U$ is the span of $u_{k+1},\ldots, u_n$. We will use the norm symbol $\|\cdot\|$ for all densities involved and $\|\cdot\|^*$ for codensities.

Clearly $\|u_1^*\wedge\ldots\wedge u_k^*\|^*=1$ is equivalent with $\|u_{k+1}\wedge\ldots\wedge u_n\|=\|u_1\wedge\ldots\wedge u_n\|$.

The dual basis to $\{A^*u_1^*,\ldots,A^*u_n^*\}$ is $\{A^{-1}u_1,\ldots,A^{-1}u_n\}$. Now
\[ J(A^*\bigr|_{U^{\perp}})=\|A^*u_1^*\wedge\ldots\wedge A^*u_k^*\|^*=\frac{\|A^{-1}u_{k+1}\wedge\ldots\wedge A^{-1}u_n\|}{\|A^{-1}u_{1}\wedge\ldots\wedge A^{-1}u_n\|}=
\]
\[=\frac{\|A^{-1}u_{k+1}\wedge\ldots\wedge A^{-1}u_n\|}{\|u_{k+1}\wedge\ldots\wedge u_n\|}\cdot\frac{\|u_1\wedge\ldots\wedge u_n\|}{\|A^{-1}u_1\wedge\ldots\wedge A^{-1}u_n\|}=\frac{J(A^{-1}\bigr|_{U})}{J(A^{-1})}=\frac{J(A)}{J(A\bigr|_{A^{-1}(U)})}.
\]
\end{proof}

For the conclusion of this section let us check  that the jacobian defined above coincides with the one introduced by Kirchheim (\cite{AK},\cite{K}) when the relevant densities  arise from the Busemann-Hausdorff definition of volume. 

\begin{definition} Let $V$ be a normed vector space of dimension $n$. Then the Busemann $n$-volume density on $V$ is
\[\mu_B:\Lambda^nV\ra\mathbb{R},\quad\quad \mu_B(v_1\wedge\ldots\wedge v_n):=\frac{\epsilon_n}{\int_B dv_1\ldots dv_n},
\]
where $\epsilon_n$ is the volume of the Euclidean unit ball and the integral in the denominator is the volume of the unit ball $B$ in $V$ measured with respect to the Lebesgue measure resulting by considering the basis $\{v_1,\ldots, v_n\}$ to be orthonormal.
\end{definition}
It is clear that one can define a $k$ -Busemann volume density on $V$ by considering $k$-dimensional subspaces of $V$ with the induced norm.
\begin{definition} Let $V$ be a normed vector space of dimension $n$. The K-jacobian of a linear map $A:V\ra W$ between normed spaces is 
\[ J^K(A):=\frac{\epsilon_n}{\mathcal{H}_n(\{x~|~\|Ax\|\leq 1\})},
\] 
where $\mathcal{H}_n$ is the $n$-Hausdorff measure on $V$ seen as a metric space with respect to its norm.
\end{definition}
Notice that if $A$ is not injective then the $K$-jacobian is $0$ as the denominator represents the volume of an infinite cylinder.
\begin{prop} The $K$-jacobian coincides with the jacobian of Definition \ref{jacdef} when the $n$-volume densities on $V$ and $W$ are the Busemann densities induced by the norms.
\end{prop}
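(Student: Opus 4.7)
The plan is to reduce the equality to a direct computation of both sides relative to a chosen basis, using a single key input: the Busemann normalization that, for any $n$-dimensional normed space $(V,\|\cdot\|)$, the Hausdorff $n$-measure coming from the norm metric satisfies $\mathcal{H}_n(B_V)=\epsilon_n$, where $B_V$ denotes the closed unit ball. The degenerate case is first out of the way: if $A$ fails to be injective, then $\{x:\|Ax\|\leq 1\}$ contains the non-trivial subspace $\Ker A$ and thus has infinite $\mathcal{H}_n$-measure, whence $J^K(A)=0$; at the same time $\Lambda^nA=0$ forces $J(A;\mu_B,\mu_B)=0$. So one may assume $A$ is a linear isomorphism.

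Fix a basis $\{v_1,\ldots,v_n\}$ of $V$ and let $\lambda_v$ be the Lebesgue measure on $V$ that assigns unit mass to the $v$-parallelepiped. Since $\mathcal{H}_n$ is translation-invariant it must be a constant multiple of $\lambda_v$, and the normalization just recalled pins the constant down:
\[
\mathcal{H}_n \;=\; \frac{\epsilon_n}{\int_{B_V}dv_1\ldots dv_n}\,\lambda_v.
\]
Because $A$ sends the $v$-parallelepiped onto the $Av$-parallelepiped one has $\lambda_v(A^{-1}(S))=\lambda_{Av}(S)$ for every Borel $S\subset W$; applied to $S=B_W$ this gives
\[
\mathcal{H}_n(\{x\in V:\|Ax\|\leq 1\}) \;=\; \frac{\epsilon_n\int_{B_W}d(Av_1)\ldots d(Av_n)}{\int_{B_V}dv_1\ldots dv_n},
\]
so $J^K(A)=\bigl(\int_{B_V}dv_1\ldots dv_n\bigr)\big/\bigl(\int_{B_W}d(Av_1)\ldots d(Av_n)\bigr)$.

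On the other hand, the jacobian of Definition \ref{jacdef}, computed by homogeneity of degree one without normalizing $\xi$, equals
\[
\frac{\mu_B(Av_1\wedge\ldots\wedge Av_n)}{\mu_B(v_1\wedge\ldots\wedge v_n)} \;=\; \frac{\epsilon_n/\int_{B_W}d(Av_1)\ldots d(Av_n)}{\epsilon_n/\int_{B_V}dv_1\ldots dv_n},
\]
the same expression as above. The main obstacle is really only the Busemann normalization $\mathcal{H}_n(B_V)=\epsilon_n$: in a non-Euclidean normed space the classical isodiametric inequality does not apply directly, so one must appeal to Busemann's theorem (or cite its treatment in \cite{AK},\cite{K}) or provide a self-contained covering argument. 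Once that identity is granted, the rest is routine linear algebra.
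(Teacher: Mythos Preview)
Your proof is correct and follows essentially the same route as the paper's: both invoke Busemann's theorem identifying $\mathcal{H}_n$ with the measure induced by the Busemann density, then finish by a change-of-variables computation in a chosen basis (the paper normalizes so that $\mu_B(v_1\wedge\ldots\wedge v_n)=1$, which slightly streamlines the arithmetic). One notational caveat: when $\dim W>n$, your phrase ``assume $A$ is a linear isomorphism'' must be read as replacing $W$ by $A(V)$ with the induced norm---otherwise $\int_{B_W}d(Av_1)\ldots d(Av_n)$ should be written $\int_{B_W\cap A(V)}d(Av_1)\ldots d(Av_n)$, exactly as the paper does when it passes to $A(V)\cap B$.
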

\begin{proof} The Busemann $n$-density induces a translation invariant measure on $V$ (a multiple of any {\it non-canonical} Lebesgue measure on $V$) and a result of Busemann says that the volume of a measurable set $K$ with respect to this measure equals its Hausdorff measure $\mathcal{H}_n(K)$. Let $K:=\{x~|~\|Ax\|\leq 1\}$.

Let $v_1,\ldots, v_n$ be vectors such that $\mu_B(v_1\wedge\ldots\wedge v_n)=1$. Then
\begin{equation}\label{eq1} \mathcal{H}_n(K)=\int_K~dv_1\ldots dv_n,
\end{equation}
where $dv_1\ldots dv_n$ represents the Lebesgue measure $\lambda$ determined by $v_1,\ldots v_n$. Indeed if $\mu$ and $\nu$ are two translation invariant measures on $V$ then 
\[ \frac{\vol(K,\mu)}{\vol(P,\mu)}=\frac{\vol(K,\nu)}{\vol(P,\nu)}
\]
for any non-zero measure set $P$. If we let $P$ be the parallelotope spanned by $v_1,\ldots,v_n$, $\mu:=\mu_B$ and $\nu=\lambda$ we get (\ref{eq1}).

If $A:V\ra W$ is injective, then $A$ is an isometry between the euclidean space $V$ with orthonormal basis $v_1,\ldots, v_n$ and $A(V)$ with orthonormal basis $Av_1,\ldots, Av_n$. We therefore get
\[ \mathcal{H}_n(K)=\int_K~dv_1\ldots dv_n=\int_{A(K)}~dA(v_1)\wedge\ldots\wedge dA(v_n)=\]\[=\int_{A(V)\cap B}~dA(v_1)\wedge\ldots\wedge dA(v_n)=\frac{\epsilon_n}{\mu_B(A(v_1)\wedge\ldots\wedge A(v_n))}=\frac{\epsilon_n}{J(A)}.
\]
\end{proof}

\section{The manifold picture}\label{Sec2} In its simplest form, the area formula  is a change of variables  while the coarea is  Fubini's theorem. We state the analogues of these results for volume densities in the  manifold case.

\begin{definition}\label{defdens} Let $M$  be $C^1$-manifold of dimension $n$. A $k$-volume density on $M$ is a continuous map $F:\Lambda^kTM\ra\mathbb{R}$, such that  $F(m)\in \mathscr{D}_k(T_mM)$, for every $m\in M$. Another name for $F$ is  $k$-dimensional positive parametric integrand.
\end{definition} 
Again we will distinguish two situations, the unoriented one in which the densities are required to be symmetric and the oriented one in which the top degree densities are still required to be symmetric while the ones of lower degree could be non-symmetric.

A $k$-volume density restricted to a $C^1$-submanifold $M$ of dimension $k$,  determines a measure on this set via a standard process which we review. First let us assume that $k=n$ and that $M$ is an open subset of $\mathbb{R}^n$. Then for every Borel subset $B\subset M$ one defines
\begin{equation} \label{defF} \int_{B}~dF=\int_BF_x(e_1\wedge\ldots\wedge e_n)~d\lambda(x),
\end{equation}
where $e_1\wedge\ldots\wedge e_n$ is the vector of Euclidean norm $1$ (positively oriented) and $\lambda$ is the Lebesgue measure. Already in this definition we notice the appearance of the jacobian since
\[ F_x(e_1\wedge\ldots\wedge e_n)=J(\id_{T_x\mathbb{R}^n}; \lambda, F_x).
\]
One extends this definition to Borel subsets of an abstract manifold $M$ of dimension $k$, by putting
\[ \int_B~dF:=\int_{\alpha^{-1}(B)}~d\alpha^*F
\] where $\alpha:\Omega\ra M$ is a chart, $\Omega\subset\bR^n$ open, bounded and $B\subset \alpha(\Omega)$ is a Borel set. We denote by $\alpha^*F$ the pull-back of the density $F$ via the map $\alpha$. This definition is independent of the chart because of the standard change of variables formula. 

In other words, if $\phi:\Omega\ra \Omega$ is a diffeomorphism  with $\Omega\subset \bR^n$ open then for every Borel subset $\Gamma\subset \Omega$ we have
\[\int_{\Gamma}~d\phi^*F:=\int_{\Gamma} \phi^*F_x(e_1\wedge\ldots\wedge e_n)~d\lambda(x)=\int_{\Gamma}F_{\phi(x)}(d\phi_x(e_1)\wedge\ldots\wedge d\phi_x(e_n))~d\lambda(x)=\]\[=\int_{\Gamma}F_{\phi(x)}(\det{(d\phi_x)}(e_1\wedge\ldots \wedge e_n))=\int_{\Gamma}|\det{(d\phi_x)}|F_{\phi(x)}(e_1\wedge\ldots\wedge e_n)~d\lambda(x)=\]\[=\int_{\phi({\Gamma})}F_x(e_1\wedge\ldots\wedge e_n)~d\lambda(x)=\int_{\phi({\Gamma})}~dF.
\] 
Using the jacobian, we can rewrite this as 
\[ \int_B J(d\phi_x;\lambda,F_x)~d\lambda(x)=\int_{\phi(B)}~dF.
\]
\begin{remark} The discussion above was only for symmetric densities. If $F$ is non-symmetric \footnote{Non-symmetric means, as usual, possibly non-symmetric.} then $M$ is assumed oriented and the charts above are orientation preserving, following the spirit of the integral of differential forms.
\end{remark}
Without further ado let us state the change of variables.
\begin{prop}\label{cov} Let  $(B,F)$ and $(P,G)$ be two $C^1$ manifolds endowed with $n$-volume densities and let $\phi:B\hookrightarrow P$ be a proper embedding. Suppose furthermore that $\dim{B}=n$. Let $f:P\ra\mathbb{R}$ be a continuous function with compact support. Then
\[  \int_{\phi(B)}f~dG=\int_{B}J(d\phi_x;F,G)f\circ \phi~dF
\]
\end{prop}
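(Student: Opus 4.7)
The plan is to reduce Proposition \ref{cov} to the local Euclidean change of variables that is essentially already carried out in the paragraph preceding the statement, via charts and a partition of unity. A preliminary observation: since $\dim B = \dim P$, the inverse function theorem shows that the embedding $\phi$ is a local diffeomorphism, so $\phi(B)$ is open in $P$; being also proper and injective, $\phi(B)$ is a union of connected components of $P$ and $\phi:B\to\phi(B)$ is a global diffeomorphism. So we may and will integrate $f$ only over $\phi(B)$.

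First I would reduce to the case where $f$ is supported in a single chart. Cover the compact set $\supp f\cap \phi(B)$ by finitely many chart domains $U_i=\alpha_i(\Omega_i)\subset\phi(B)$, with $\Omega_i\subset\mathbf{R}^n$ open and bounded (in the oriented case one takes all $\alpha_i$ positively oriented), and let $\{\rho_i\}$ be a subordinate partition of unity. Both sides of the claimed equality are linear in $f$, so it suffices to prove the formula for each $\rho_i f$ separately, i.e.\ for $f$ supported in a single $U=\alpha(\Omega)$.

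Second, I would push everything through the pair of compatible charts $\alpha$ on $P$ and $\beta:=\phi^{-1}\circ\alpha:\Omega\to\phi^{-1}(U)\subset B$ on $B$, so that $\phi\circ\beta=\alpha$. By the definition of the density integral in a chart,
\[
\int_{\phi(B)} f\,dG=\int_{\Omega}(f\circ\alpha)(x)\,(\alpha^*G)_x(e_1\wedge\cdots\wedge e_n)\,d\lambda(x),
\]
\[
\int_B (f\circ\phi)\,J(d\phi;F,G)\,dF=\int_{\Omega}(f\circ\alpha)(x)\,J(d\phi_{\beta(x)};F,G)\,(\beta^*F)_x(e_1\wedge\cdots\wedge e_n)\,d\lambda(x).
\]
The two Lebesgue integrals coincide because the integrands are equal pointwise: by the chain rule $d\alpha_x=d\phi_{\beta(x)}\circ d\beta_x$, so the simple $n$-vector $d\beta_x(e_1\wedge\cdots\wedge e_n)\in\Lambda^n T_{\beta(x)}B$ can be fed into the defining relation of the jacobian (together with homogeneity of $F$ and $G$) to give
\[
G_{\alpha(x)}\bigl(d\phi_{\beta(x)}(d\beta_x(e_1\wedge\cdots\wedge e_n))\bigr)=J(d\phi_{\beta(x)};F,G)\cdot F_{\beta(x)}\bigl(d\beta_x(e_1\wedge\cdots\wedge e_n)\bigr),
\]
which is exactly $(\alpha^*G)_x(e_1\wedge\cdots\wedge e_n)=J(d\phi_{\beta(x)};F,G)\,(\beta^*F)_x(e_1\wedge\cdots\wedge e_n)$.

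The main obstacle is essentially bookkeeping rather than a substantive difficulty. Two points require care. First, one must check that the chart-wise definition of $\int dF$ is chart-independent, which is the classical change of variables computation displayed in the paragraph before the proposition. Second, in the oriented / non-symmetric case one must make sure all charts $\alpha_i$ are orientation-preserving so that $e_1\wedge\cdots\wedge e_n$ is positively oriented; then the compatibility $\phi\circ\beta=\alpha$ forces $\beta$ to be positively oriented as well (using that $\phi:B\to\phi(B)$ is orientation-preserving), and the defining relation $J(A;F,G)=G(Av_1\wedge\cdots\wedge Av_n)$ with $F(v_1\wedge\cdots\wedge v_n)=1$ and positively oriented $v_i$'s applies without sign ambiguity.
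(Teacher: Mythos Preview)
Your argument contains a genuine misreading of the hypotheses. You write ``since $\dim B=\dim P$'', but the proposition never assumes this: $G$ is only an $n$-density on $P$, and Definition~\ref{defdens} allows $k$-densities on manifolds of dimension $\geq k$. In fact the very next application in the paper (the inequality $\vol(s(B))\geq\vol(B)$ for sections of a fiber bundle $\pi:P\to B$) uses Proposition~\ref{cov} with $\dim P>\dim B=n$. In that generality $\phi(B)$ is an $n$-dimensional submanifold of $P$, not an open subset, so your inverse-function-theorem reduction and the choice of charts $\alpha_i:\Omega_i\to U_i\subset P$ as charts \emph{on $P$} both break down.

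The fix is minor and your computation survives it: take the $\alpha_i$ to be charts on the $n$-submanifold $\phi(B)$ rather than on $P$ (this is exactly how $\int_{\phi(B)}f\,dG$ is defined in the paragraph preceding the proposition), set $\beta_i=\phi^{-1}\circ\alpha_i$ as charts on $B$, and your pointwise identity
\[
(\alpha^*G)_x(e_1\wedge\cdots\wedge e_n)=J(d\phi_{\beta(x)};F,G)\,(\beta^*F)_x(e_1\wedge\cdots\wedge e_n)
\]
goes through unchanged. With this correction your proof is essentially the paper's: the paper takes a chart $\alpha:\Omega\to B$, writes both sides as integrals over $\Omega$, and invokes Proposition~\ref{properties}(b) for the multiplicativity $J(d(\phi\circ\alpha);\lambda,G)=J(d\alpha;\lambda,F)\cdot J(d\phi;F,G)$, which is exactly the identity you derive by hand.
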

\begin{proof} One considers $\alpha:\Omega\ra B$ to be a chart on $B$ with $\Omega\subset\bR^n$ open bounded and rewrites the two quantities as integrals over $\Omega$. The result is a simple application of Proposition \ref {properties} part (b). 
\end{proof}

Let us give a simple application of Proposition \ref{cov}. We recall the following from \cite{AT}.
\begin{definition} A definition of volume in dimension $n$ associates in a natural (functorial) way to every normed vector space $V$ of dimension $n$ a norm on $\Lambda^nV$  and satisfies a few axioms one of which is the next one. 

\noindent
\;\;(*)\;\; \emph{If $T:(V,\|\cdot\|_V)\ra (W,\|\cdot\|_W)$ is  a linear map between vector spaces of dimension $n$ such that $\|T\|\leq 1$ then $\|\Lambda^nT\|\leq 1$.}
\end{definition}

By a Finsler manifold we will understand a manifold such that the tangent space at each point is endowed with a norm, which varies continuously with the point. Strictly speaking the definition should allow for non-reversible (asymmetric) norms and should include  some strong convexity assumption. 

\begin{prop} Let $\pi:P\ra B$ be a fiber bundle of Finsler manifolds with $B$ compact. Suppose a definition of volume  has been fixed, e.g. the Hausdorff-Busemann. Suppose furthermore that $\|d\pi\|\leq 1$. Then
\[ \vol{(s(B))}\geq \vol(B),
\]
for all sections $s:B\ra P$. 
\end{prop}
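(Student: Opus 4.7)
The plan is to apply Proposition \ref{cov} with $\phi=s$ (which is a proper embedding since $B$ is compact and $s$ is a section of a bundle, hence a homeomorphism onto its image) to rewrite
\[ \vol(s(B))=\int_{s(B)}~dG=\int_B J(ds_x;F,G)~dF, \]
where $F$ is the $n$-volume density on $B$ prescribed by the fixed definition of volume applied to the Finsler norm on $T_xB$, and $G$ is the $n$-volume density on the submanifold $s(B)$ coming from the restriction of the Finsler norm of $T_{s(x)}P$ to the $n$-dimensional subspace $T_{s(x)}s(B)=\Imag(ds_x)$. With this reduction, the theorem will follow once we prove the pointwise inequality $J(ds_x;F,G)\geq 1$ for every $x\in B$.

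To establish this pointwise inequality, I would differentiate the identity $\pi\circ s=\id_B$ at a point $x\in B$ to get
\[ d\pi_{s(x)}\circ ds_x=\id_{T_xB}. \]
By the multiplicativity of the jacobian under composition (Proposition \ref{properties}(b)) with respect to the triple of densities $F$ on $T_xB$, $G$ on $\Imag(ds_x)$, and $F$ again on $T_xB$, this yields
\[ 1=J(\id_{T_xB};F,F)=J(ds_x;F,G)\cdot J\bigl(d\pi_{s(x)}\bigr|_{\Imag(ds_x)};G,F\bigr). \]
Thus it suffices to show that the second factor is at most $1$.

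For that second factor I invoke the defining axiom (*) for a definition of volume. The restriction $d\pi_{s(x)}\bigr|_{\Imag(ds_x)}$ is a linear map between two $n$-dimensional normed spaces $(\Imag(ds_x),\|\cdot\|)$ and $(T_xB,\|\cdot\|)$, and its operator norm is bounded above by $\|d\pi_{s(x)}\|\leq 1$. Since $G$ and $F$ are, by functoriality, the norms on the top exterior powers assigned by the chosen definition of volume to these two normed spaces, axiom (*) gives $\|\Lambda^n (d\pi_{s(x)}|_{\Imag(ds_x)})\|\leq 1$, which is exactly the statement $J(d\pi_{s(x)}|_{\Imag(ds_x)};G,F)\leq 1$. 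Combined with the identity above, this forces $J(ds_x;F,G)\geq 1$, and integrating over $B$ finishes the proof.

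The main subtlety, and the only place one needs to be careful, is the identification of $G$: it must be the density assigned by the volume definition to $\Imag(ds_x)$ equipped with the restricted Finsler norm from $T_{s(x)}P$, since only then does axiom (*) apply to $d\pi_{s(x)}|_{\Imag(ds_x)}$ as a $1$-Lipschitz map of normed spaces. Once this functorial identification is in place, the proof reduces to the short algebraic manipulation above.
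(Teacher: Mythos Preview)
Your proof is correct and follows essentially the same approach as the paper: apply the change-of-variables formula to $s$, use $d\pi\circ ds=\id$ together with multiplicativity of the jacobian to write $J(ds;F,G)=1/J(d\pi|_{\Imag ds};G,F)$, and invoke axiom $(*)$ to bound the latter by $1$. Your write-up is simply more explicit about the functorial identification of $G$ and the use of Proposition~\ref{properties}(b), both of which the paper leaves implicit.
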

\begin{proof} Let $\dim{B}=n$ and let $F$ and $G$ be the $n$-volume densities on $B$ and $P$ that  the definition of volume fixes. Then 
\[\vol{(s(B))}=\int_{s(B)}~dG=\int_BJ(ds;F,G)~dF\geq \int_B~dF=\vol(B),\]
because $J(ds;F,G)=1/J(d\pi\bigr|_{\Imag d\phi};G,F)\geq 1$ since the definition of volume satisfies $(*)$.
\end{proof}
The proposition generalizes thus the next well known result.
\begin{corollary} Let $\pi:P\ra B$ be a Riemannian submersion with $B$ compact. Then 
\[ \vol(s(B))\geq \vol(B),
\]
for all sections $s:B\ra P$.
\end{corollary}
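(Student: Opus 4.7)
The plan is to deduce this directly from the preceding Proposition, once we verify its two hypotheses for the Riemannian case. First, the Riemannian metric on $P$ and $B$ makes both into Finsler (indeed Riemannian) manifolds, and the Riemannian volume on any Riemannian manifold coincides with the Hausdorff--Busemann volume density; so choosing the Busemann definition of volume, the top-degree densities $F$ on $B$ and $G$ on $P$ appearing in the previous proof are just the usual Riemannian volume densities, and $\vol$ here can be read as Riemannian volume.

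Next, I would verify that $\|d\pi\|\leq 1$ pointwise. By the very definition of a Riemannian submersion, at each $p\in P$ the orthogonal decomposition $T_pP=\mathcal{H}_p\oplus \mathcal{V}_p$ (horizontal $\oplus$ vertical) is such that $d\pi_p$ kills $\mathcal{V}_p$ and restricts to a linear isometry $\mathcal{H}_p\to T_{\pi(p)}B$. Writing $v=v^{\mathcal{H}}+v^{\mathcal{V}}$ we get
\[
\|d\pi_p(v)\|^2=\|v^{\mathcal{H}}\|^2\leq \|v^{\mathcal{H}}\|^2+\|v^{\mathcal{V}}\|^2=\|v\|^2,
\]
so indeed $\|d\pi_p\|\leq 1$ for every $p$.

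The one point worth pausing on is that the previous proposition is stated for a \emph{fiber bundle} $\pi:P\to B$, whereas we are given only a submersion. Here one invokes Hermann's theorem: a Riemannian submersion whose fibers are complete (in particular whenever $P$ itself is complete) is automatically a locally trivial fiber bundle, the local trivializations being built from horizontal lifts of geodesics in $B$. Since $B$ is compact, one obtains a global fiber bundle structure, which is the only use of compactness needed here. With both hypotheses of the Proposition verified, it delivers $\vol(s(B))\geq \vol(B)$ for every section $s:B\to P$, completing the proof.

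The main obstacle is not analytic but hypothetical: one needs the fiber-bundle structure to invoke the proposition. If the author does not wish to rely on Hermann's theorem, one can alternatively argue directly via the area formula of Proposition \ref{cov}, applied to the embedding $s:B\hookrightarrow P$, once one observes that $d\pi_{s(b)}\circ ds_b=\id_{T_bB}$ forces $ds_b$ to have its horizontal component equal to the horizontal lift, so $J(ds_b;F,G)\geq 1$ by the Riemannian version of the estimate $J(ds)\cdot J(d\pi|_{\Imag ds})=1$ combined with $\|d\pi\|\leq 1$. Either route gives the stated inequality.
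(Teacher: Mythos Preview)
Your approach is essentially the paper's: the one-line proof there reads ``The differential $d\pi$ is an orthogonal projection,'' which is exactly your verification that $\|d\pi\|\leq 1$, after which the preceding Proposition is applied.

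Your detour through Hermann's theorem, however, has a gap and is in any case unnecessary. The gap is that Hermann's theorem requires completeness of $P$ (or of the fibers), which is nowhere assumed; compactness of $B$ alone does not supply it, so the sentence ``Since $B$ is compact, one obtains a global fiber bundle structure'' is not justified. More importantly, if you look at the proof of the Proposition itself, the fiber-bundle hypothesis is never actually used: the argument only needs that $s$ is a proper embedding and that $\pi\circ s=\id_B$, which gives $J(ds;F,G)\cdot J(d\pi|_{\Imag ds};G,F)=1$. So the Proposition already applies verbatim to any submersion with a section, and your ``alternative'' direct argument via Proposition~\ref{cov} is in fact the same computation the paper performs inside the proof of the Proposition. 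There is no obstacle to overcome.
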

\begin{proof} The differential $d\pi$ is an orthogonal projection.
\end{proof}

\vspace{0.3cm} 

 Let us now review how fiber-integration of densities  works (see, for example Section 7.12 in \cite{GHV}). Suppose $\pi:P\ra B$ is a submersion of $C^1$-manifolds with $\dim{P}=n+m$ and $\dim{B}=m$ and let $\mu$ be a top degree density over $P$. The push-forward  $\pi_*\mu$ of the measure determined by $\mu$ exists always and in this situation is a top degree density on $B$ which is constructed  by first defining the  "retrenchment" $R_{\mu}$ of $\mu$. This is an $n$-density on each fiber $P_b:=\pi^{-1}(b)$ with values in the space of $m$-densities on $T_bB$ which at a point $p\in P_b$ has the expression  
\[ R_{\mu}(p)(w_1\wedge\ldots\wedge w_n)(v_1\wedge\ldots\wedge v_m)=\mu_p(w_1\wedge\ldots\wedge w_n\wedge \tilde{v}_1\wedge\ldots\wedge \tilde{v}_m),\]
for all $w_1\wedge\ldots\wedge w_n\in\Lambda^nV_pP$ where $VP_p:=\Ker d\pi_p$ and $v_1\wedge\ldots\wedge v_m\in \Lambda^nT_bB$ . Here $\tilde{v}_1,\ldots \tilde{v}_m$ are lifts of the $v_i$'s to $T_pP$. The retrenchment does not depend on the lifts $\tilde{v}_1,\ldots \tilde{v}_m$. Then the push-forward density of $\mu$ is defined 
\[ \pi_{*}\mu(b):=\int_{P_b} dR_{\mu}.
\]
Integration on the right hand-side works as follows. The space of (positive, top-dimensional) volume densities on $T_bB$ is in bijection (non-canonically) with the half line $(0,\infty)$, by evaluating the density on a fixed vector $v_1\wedge\ldots\wedge v_m\in\Lambda^mT_bB$.  So we use one such bijection to integrate the density $R_{\mu}$ as if it was a real valued density and interpret the result via the inverse of the same bijection as a density on $T_bB$. 
\begin{remark} Nothing changes if we consider negatively valued densities or even complex ones. 
\end{remark}
The following relation is an application of the standard Fubini theorem, after one chooses trivializing charts on $B$ and $P$:
\begin{equation}\label{Fubini} \int_P\mu=\int_B\pi_*\mu.
\end{equation}

All densities in the next result are assumed symmetric:
\begin{prop}\label{coFub} Let $(P,F,\mu)$ be an $n+m$ dimensional $C^1$ manifold endowed with an $n+m$-volume density $\mu$ and an $n$-density $F$. Let $\pi:P\ra B$ be  a $C^1$ submersion over an $m$-dimensional $C^1$ manifold $B$ also endowed with a top degree volume density $\lambda$. Then
\[ \int_P C(d\pi,\lambda^*,F_{\mu}^*)~d\mu=\int_B\vol_F(\pi^{-1}(b))~d\lambda(b),
\]
where $\vol_F(\pi^{-1}(b))$ is the volume of the fiber with respect to the density $F$. 
\end{prop}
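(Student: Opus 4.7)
The idea is to reduce everything to the ordinary Fubini formula \eqref{Fubini} by recognizing $C(d\pi)\cdot \mu$ as the top-degree density on $P$ whose fiber-retrenchment factors as the tensor product of $F\bigr|_{VP}$ with $\lambda$ on $B$. Once this factorization is established pointwise, a single application of \eqref{Fubini} yields the claim.

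First, I would form the top density $\mu':=C(d\pi;\lambda^{*},F_{\mu}^{*})\cdot\mu$ on $P$; since $C(d\pi;\lambda^{*},F_{\mu}^{*})$ is a nonnegative continuous function on $P$, this is again a genuine $(n+m)$-volume density. By \eqref{Fubini} one has
\[
\int_{P} C(d\pi;\lambda^{*},F_{\mu}^{*})\,d\mu \;=\;\int_{P} d\mu' \;=\;\int_{B} \pi_{*}\mu'.
\]
Thus it suffices to show that, for every $b\in B$,
\[
(\pi_{*}\mu')_{b}\;=\;\vol_{F}(\pi^{-1}(b))\cdot \lambda_{b}
\]
as elements of $\mathscr{D}^{+}_{m}(T_{b}B)$.

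Next, I would compute the retrenchment $R_{\mu'}$ pointwise. Fix $p\in P_{b}$, pick a basis $w_{1},\dots,w_{n}$ of $V_{p}P=\ker d\pi_{p}$, and pick $v_{1},\dots,v_{m}\in T_{p}P$ such that $\bar v_{i}:=d\pi_{p}(v_{i})$ form a basis of $T_{b}B$. The $v_{i}$'s are then lifts of the $\bar v_{i}$'s, so by definition
\[
R_{\mu}(p)(w_{1}\wedge\dots\wedge w_{n})(\bar v_{1}\wedge\dots\wedge \bar v_{m})
\;=\;\mu_{p}(w_{1}\wedge\dots\wedge w_{n}\wedge v_{1}\wedge\dots\wedge v_{m}).
\]
On the other hand, Example \ref{excodens1} gives
\[
C(d\pi_{p};\lambda^{*},F_{\mu}^{*})\;=\;\frac{F(w_{1}\wedge\dots\wedge w_{n})\cdot\lambda(\bar v_{1}\wedge\dots\wedge \bar v_{m})}{\mu_{p}(w_{1}\wedge\dots\wedge w_{n}\wedge v_{1}\wedge\dots\wedge v_{m})},
\]
(the denominator being invariant under reordering of the $v$'s and $w$'s because $\mu$ is symmetric). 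Multiplying the two identities yields
\[
R_{\mu'}(p)(w_{1}\wedge\dots\wedge w_{n})(\bar v_{1}\wedge\dots\wedge \bar v_{m})
\;=\;F(w_{1}\wedge\dots\wedge w_{n})\cdot\lambda_{b}(\bar v_{1}\wedge\dots\wedge \bar v_{m}).
\]
In other words, as a density-valued density on $V_{p}P$, the retrenchment decouples as
\[
R_{\mu'}(p)\;=\;F\bigr|_{V_{p}P}\otimes\lambda_{b}.
\]

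Finally, integrating this identity over the fiber $P_{b}=\pi^{-1}(b)$, for which $F\bigr|_{VP}$ is precisely the $n$-density inducing $\vol_{F}(P_{b})$, gives
\[
(\pi_{*}\mu')_{b}\;=\;\int_{P_{b}} dR_{\mu'}\;=\;\Bigl(\int_{P_{b}} dF\Bigr)\cdot \lambda_{b}\;=\;\vol_{F}(\pi^{-1}(b))\cdot\lambda_{b}.
\]
Substituting back into the Fubini identity completes the proof.

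The only subtlety, and the step I would check most carefully, is the retrenchment-cojacobian identity above: one must make sure the choice of lifts and of ordering of the basis (with the attendant sign/orientation issues invisible here because we are in the symmetric case) matches on both sides. Everything else is a direct application of \eqref{Fubini} and the definition of $\vol_{F}$ on a submanifold via densities, already reviewed around \eqref{defF}.
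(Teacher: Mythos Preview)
Your proof is correct and follows essentially the same approach as the paper: both form the weighted top density $\mu'=C(d\pi)\cdot\mu$, apply the Fubini identity \eqref{Fubini}, and verify pointwise that the retrenchment $R_{\mu'}$ factors as $F\bigr|_{VP}\otimes\lambda$ (you cite Example~\ref{excodens1} directly, whereas the paper rederives the cojacobian formula from the definition of $F_{\mu}^{*}$, but this is a cosmetic difference). Your remark about the symmetry of $\mu$ handling the ordering of $v$'s and $w$'s in the denominator is exactly the right check.
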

\begin{proof} We use (\ref{Fubini}) for the density $\nu:=C(d\pi,\lambda^*,F_{\mu}^*)~d\mu$. Then the retrenchment of $\nu$ at $p\in P$ is 
\begin{equation}\label{retdens} R_\nu(p)(\omega)(\xi)= F_p(\omega) \lambda_b(\xi), \quad\quad \forall\omega\in\Lambda^mV_pP, \;\forall \xi\in\Lambda^nT_bB,
\end{equation}
which implies that
\[ \int_{P_b}dR_{\nu}=\vol_F(P_b)\lambda_b.
\]
To see that (\ref{retdens}) is true, notice that for every $v_1\wedge\ldots\wedge v_m\in\Lambda^nT_bB$
\[ C(d\pi,\lambda^*,F_{\mu}^*)_p=\frac{F^*_\mu(p)(d_p\pi^*(v_1^*)\wedge\ldots\wedge d_p\pi^*(v_n^*))}{\lambda_b(v_1^*\wedge\ldots\wedge v_m^*)}=\frac{F_p(w_1\wedge\ldots\wedge w_n)\lambda_b(v_1\wedge\ldots\wedge v_m)}{\mu_p(\tilde{v}_1\wedge\ldots\wedge \tilde{v}_m\wedge w_1\wedge\ldots\wedge w_n )},\]
where $w_1,\ldots w_n$ is a basis of $V_pP$ and $\tilde{v}_i$'s are lifts of the $v_i$'s. (see also Example \ref{excodens1}). The last equality holds because 
\[ d\pi^*(v_i^*)(\tilde{v}_j)=\delta^i_j\quad\mbox{and}\quad d\pi^*(v_i^*)(w_j)=0\quad\quad\forall i, j.
\]
\end{proof}
\begin{remark} We can allow for the density $F$ to be non-symmetric but we will require $P$ and $B$ to be oriented manifolds. In this case the  volume of a fiber $\vol_F(\pi^{-1}(b))$ is defined with respect to the induced orientation using the "fiber last" convention. This means that $w_1,\ldots, w_n\in\Ker d_p\pi$ is positively oriented if  and only if $\tilde{v}_1,\ldots, \tilde{v}_m,w_1,\ldots w_n$ is an oriented basis of $T_pP$ for an oriented basis $v_1,\ldots ,v_m$ of $T_{\pi(p)}B$.
\end{remark}

\section{The Lipschitz picture}\label{sec3}

Let us recall the classical Area Formula (Theorem 3.2.3 in \cite{F}).
\begin{theorem}[Area Formula]\label{ArF} Suppose $f:\mathbb{R}^m\ra\mathbb{R}^n$ is a Lipschitz function with $m\leq n$.
\begin{itemize}
\item[(a)] If $A$ is  an Lebesgue measurable set, then
\[\int_A J(d_xf)~d\mathscr{L}(x)=\int_{\mathbb{R}^n} N\left(f\bigr|_{A},y\right)~d\mathcal{H}^m(y),
\]
where $J(d_xf)=\sqrt{\det {(d_xf)^*d_xf}}$, $N(f\bigr|_{A},y)$ is the cardinality of the set $f^{-1}(y)\cap A$ and $\mathscr{L}$ and $\mathcal{H}^m$ represent the Lebesgue respectively the Hausdorff measures.
\item[(b)] If $u$ is an $\mathscr{L}$ integrable function, then
\[ \int_{\mathbb{R}^m} u(x)J(d_xf)~d\mathscr{L}_m=\int_{\mathbb{R}^n}\left(\sum_{x\in f^{-1}(y)}u(x)\right)~d\mathcal{H}^m(y).
\]
\end{itemize}
\end{theorem}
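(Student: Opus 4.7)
The plan is standard for the classical Area Formula: first reduce (b) to (a), then prove (a) by a Lipschitz decomposition that trivializes the map into pieces on which it is essentially affine. That (b) follows from (a) is routine measure-theoretic approximation: for a nonnegative simple function $u = \sum c_i \chi_{A_i}$ the formula is a linear combination of instances of (a), and for general integrable $u$ one splits into positive and negative parts and invokes monotone or dominated convergence. The content therefore lies entirely in (a).

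For (a), I would partition the domain according to whether the differential is injective. On $A_0 := A \cap \{J(d_xf) = 0\}$ the left-hand side vanishes; the nontrivial claim is that $f(A_0)$ has zero $\mathcal{H}^m$-measure, making the right-hand side vanish as well. This is handled by a Vitali-style covering argument: on a small cube $Q$ containing a point where $df$ has rank less than $m$, the map $f$ is close to an affine map of lower rank whose image of $Q$ sits in a thin tubular neighborhood of an $(m-1)$-plane, and summing such estimates over a fine cover while letting the error tend to zero finishes this case.

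The heart of the argument concerns $A_+ := A \setminus A_0$. Here I would invoke Rademacher's theorem so that $f$ is differentiable $\mathscr{L}$-a.e., and then perform a Lusin/Whitney-type decomposition: for each $\epsilon > 0$, write $A_+$ up to a null set as a countable disjoint Borel union $\bigsqcup_k E_k$ such that $f\bigr|_{E_k}$ is bi-Lipschitz, the differential $d_xf$ varies by at most $\epsilon$ in operator norm on $E_k$, and $f\bigr|_{E_k}$ is $(1+\epsilon)$-close to an injective affine map $L_k$. On each $E_k$ one then reduces to the linear area formula, which is immediate from the definition of Hausdorff measure together with the identity $\mathcal{H}^m(L(S)) = J(L)\,\mathscr{L}(S)$ for injective linear $L$. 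Summing over $k$ and letting $\epsilon \to 0$ gives (a), with the multiplicity $N(f\bigr|_A, y)$ emerging because a given $y$ lies in exactly as many images $f(E_k)$ as it has preimages in $A_+$, each $E_k$ being injective.

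The main obstacle is producing the Lusin/Whitney decomposition, since pointwise differentiability does not automatically yield uniform affine comparability on a neighborhood. The standard remedy is to fix a countable dense family $\{L_j\}$ of candidate linear maps and to consider sets of the form
\[
E_{j,k} := \{x \in A_+ : |f(y)-f(x)-L_j(y-x)| \leq \tfrac{1}{k}|y-x| \text{ whenever } |y-x| \leq \tfrac{1}{k}\};
\]
an Egorov-type argument shows these sets, suitably refined, produce a countable Borel exhaustion of $A_+$ modulo a set of $\mathscr{L}$-measure zero, on each piece of which the required uniform affine approximation holds. Once this decomposition is in place, everything else reduces either to the linear change of variables or to standard measure theory.
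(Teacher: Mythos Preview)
The paper does not prove this theorem at all: it is stated as a recalled classical result, attributed explicitly to Federer (Theorem 3.2.3 in \cite{F}), and then \emph{used} as a black box to derive the subsequent generalized area formula for arbitrary volume densities. So there is no paper proof to compare against.

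That said, your outline is the standard route to the classical Area Formula and is essentially what one finds in Federer or in Evans--Gariepy: Rademacher, then a countable Borel decomposition into pieces on which $f$ is uniformly close to an injective linear map (the ``Lipschitz linearization lemma''), reduction to the linear case $\mathcal{H}^m(L(S)) = J(L)\,\mathscr{L}(S)$, and a separate argument that $f$ maps the set where the Jacobian vanishes to an $\mathcal{H}^m$-null set. Your sketch is accurate at the level of strategy; the only places where real work hides are the construction of the decomposition (which you correctly identify as the main obstacle) and the measurability of $y \mapsto N(f|_A, y)$, which you do not mention but which is part of the content of Federer's argument. For the purposes of this paper, however, none of this is needed: the authors simply invoke the result.
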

We would like to replace $\mathscr{L}$ and $\mathcal{H}_m$ by two general $m$-volume densities, one on $\mathbb{R}^m$ and one on $\mathbb{R}^n$. In order to do that it is convenient to write the integral on the right as an integral over $f(A)$ which is the support of the function $y\ra N(f\bigr|_{A},y)$. It is worth noting that $f(A)$ is an $\mathcal{H}_m$ measurable set, which is  an implicit statement in the area formula and part of a more general fact.
\begin{prop} Let $X$ and $Y$ be metric spaces such that $X$ is separable, complete. Let $f:X\ra Y$ be a Lipschitz map. Then $f$ takes $\mathcal{H}^m$ measurable sets into $\mathcal{H}^m$ measurable sets.
\end{prop}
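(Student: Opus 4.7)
The plan is to combine two standard tools: the Lipschitz estimate $\mathcal{H}^m(f(N))\le \Lip(f)^m\,\mathcal{H}^m(N)$, which shows that $f$ sends $\mathcal{H}^m$-null sets to $\mathcal{H}^m$-null sets, and the Lusin--Suslin theorem, which says that the continuous image of a Borel subset of a Polish space $X$ is an analytic (Suslin) subset of $Y$. Analytic subsets of a metric space are $\mathcal{H}^m$-measurable---a consequence of Choquet's capacitability theorem applied to the Borel regular metric outer measure $\mathcal{H}^m$ (compare \cite{F}, 2.10.23).

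First I would verify the Caratheodory criterion for $f(E)$: only test sets $T\subset Y$ with $\mathcal{H}^m(T)<\infty$ are nontrivial. By Borel regularity of $\mathcal{H}^m$ on $Y$, replace $T$ by a Borel envelope $T^*\supset T$ of the same measure. Since $T^*\cap f(E)=f(E\cap f^{-1}(T^*))$ and similarly for the complement, it suffices to prove that $f(E_1)$ is $\mathcal{H}^m$-measurable, where $E_1:=E\cap f^{-1}(T^*)$. A further localization---intersecting $E_1$ with a countable cover of $X$ by open balls obtained from separability and using Borel regularity on $X$---reduces to the case $\mathcal{H}^m(E_1)<\infty$.

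Under this finite-measure hypothesis, Borel regularity on $X$ produces Borel sets $A\subset E_1\subset B$ with $\mathcal{H}^m(B\setminus A)=0$ (apply Borel regularity once to cover $E_1$ and once to cover $B\setminus E_1$), hence $E_1=A\cup N$ with $N=E_1\setminus A$ being $\mathcal{H}^m$-null. Then
\[ f(E_1)=f(A)\cup f(N), \]
where $f(A)$ is analytic by Lusin--Suslin and hence $\mathcal{H}^m$-measurable by Choquet, while $f(N)$ is $\mathcal{H}^m$-null by the Lipschitz estimate; both are therefore $\mathcal{H}^m$-measurable, and so is their union.

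The main obstacle is the reduction to the finite-measure regime on $X$, since $\mathcal{H}^m$ need not be $\sigma$-finite on an abstract separable complete metric space: pulling back a finite-measure Borel envelope $T^*\subset Y$ to $f^{-1}(T^*)\subset X$ does \emph{not} a priori yield finite Hausdorff measure upstairs, because $f$ is Lipschitz in the wrong direction for that. The separability of $X$, which supplies a countable cover by balls, together with Borel regularity is what ultimately makes the local reduction feasible; beyond this point the argument is the standard one in geometric measure theory, cf.~\cite{F}.
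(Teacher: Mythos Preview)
Your strategy---split the measurable set into a Borel piece and a null piece, apply the Suslin/analytic machinery to the first and the Lipschitz estimate $\mathcal{H}^m(f(N))\le(\Lip f)^m\,\mathcal{H}^m(N)$ to the second---is exactly the paper's. The paper does it in three lines: it asserts the decomposition $A=B\cup N$ directly from Borel regularity, cites Federer 2.2.10--13 for the $\mathcal{H}^m$-measurability of the continuous image $f(B)$, and invokes Federer's Corollary 2.10.11 for $f(N)$. Your detour through the Carath\'eodory criterion and the pullback of a finite-measure hull $T^*$ is not needed once that decomposition is accepted.

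You are right to flag that the decomposition $A=B\cup N$ is not an automatic consequence of Borel regularity without some finiteness hypothesis: for $\mathcal{H}^0$ on $\mathbb{R}$ every subset is Carath\'eodory-measurable, yet only Borel sets can be written as Borel-plus-null. But your proposed remedy does not close the gap. Covering $X$ by open balls does \emph{not} reduce $E_1$ to finite $\mathcal{H}^m$-measure, because open balls in a Polish space need not have finite $\mathcal{H}^m$-measure---any ball in $\mathbb{R}^{m+1}$ has infinite $\mathcal{H}^m$-measure---so intersecting with them gains nothing, and separability alone cannot manufacture $\sigma$-finiteness of $\mathcal{H}^m$. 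The paper simply asserts the decomposition without addressing this; in the application it actually cares about, $X=\mathbb{R}^m$ and $\mathcal{H}^m$ coincides with Lebesgue measure, which is $\sigma$-finite, so the issue is harmless there.
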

\begin{proof} Since $\mathcal{H}^m$ is Borel regular every measurable set $A$ can be written as $A=B\cup N$ with $B$ Borel and $\mathcal{H}^m(N)=0$. Now [2.2.10-13] in \cite{F} explains why the set $f(B)$ is $\mathcal{H}^m$-measurable for every continuous function $f$. On the other hand, Corollary 2.10.11 in \cite{F}, saying that
\begin{equation}\label{LipH} (\Lip f)^m\mathcal{H}^m(C)\geq\int N(f\bigr|_{C},y)~d\mathcal{H}^m(y),\quad\quad \forall C
\end{equation}
implies that $f(N)$ is a set of measure zero.
\end{proof}

Henceforth $f:\bR^m\ra \bR^n$ will denote a Lipschitz map and Rademacher Theorem says that $f$ is differentiable almost everywhere. In the second part of the Area Formula (see also Corollary 3.2.4 in \cite{F})   Federer shows that the set $f(\Crit{f})$ has $m$-Hausdorff measure  zero where 
\[ \Crit(f)=\{x~|~\Ker d_xf\neq 0\}.
\]
So with the exception of an $\mathcal{H}^m$-negligible subset the set $f(A)$ has a natural "tangent" space at every point $f(x)$ and that is $d_xf(\bR^m)$. In fact this tangent space coincides with the approximate tangent space $\mathcal{H}^m$-almost everywhere. For the definition and properties of the almost tangent space of an $(m,\mathcal{H}^m)$ countably rectifiable set see \cite{Si} (Def. 11.2 and Remark 11.7). It is important to be able to identify the tangent space of $f(A)$ in this manner since we want to define an orientation of $f(A)$, meaning the selection of a "positive" side to $\Lambda^mT_{p}f(A)$. By definition the positive orientation vector on $f(A)$ will be $\Lambda^m d_xf(e_1\wedge\ldots\wedge e_m)$ at all points where $d_xf(\bR^m)$ coincides with the approximate tangent space, which means almost everywhere. 

 We can define for every $m$-density $G$ on $\mathbb{R}^n$
\[ \int_{f(A)}~dG(y):=\int_{f(A)} G(y,\xi_y)~d\mathcal{H}^m(y),
\]
where $\xi_y\in\Lambda^m T_yf(A)$ is one of the two vectors of euclidean norm $1$ if $G$ is symmetric or is the unique positive oriented vector of norm $1$ if $G$ is not symmetric.  This is of course the definition of the integral of an $m$-parametric integrand over an $m$-dimensional integer-multiplicity current (see page 515 in \cite{F}) which for $f(A)$ a $C^1$-manifold coincides with the one given in Section \ref{Sec2}. 

One can extend  this definition to obtain a measure  on $f(A)$ denoted $G$, which is absolutely continuous with respect to $\mathcal{H}^m$ and such that the Radon-Nykodim derivative of $G$ by $\mathcal{H}^m$ is a jacobian:
\[ \frac{dG}{d\mathcal{H}^m}(y)=J(\id_{T_yf(A)};\mathcal{H}^m,G)=G(y,\xi_y)
\] 
Above, the notation $\mathcal{H}^m$ plays also the role of the euclidean $m$-density on $\mathbb{R}^n$. A measurable function $v$ is called $G$-integrable if $\int_{f(A)}|v|~dG<\infty$.

\begin{theorem} Suppose $f:\mathbb{R}^m\ra\mathbb{R}^n$ is a Lipschitz function with $m\leq n$ and let $F$ be an $m$-density on $\mathbb{R}^m$ and $G$ be an $m$-density on $\mathbb{R}^n$.
\begin{itemize}
\item[(a)] If $A$ is  a Lebesgue measurable set, then
\[\int_A J(d_xf;F,G)~dF(x)=\int_{f(A)} N\left(f\bigr|_{A},y\right)~dG(y),
\]
\item[(b)] If  $u$ is a non-negative measurable function or $u$ is  an $F$-integrable function  over $A$, then
\[ \int_{A} u(x)J(d_xf;F,G)~dF=\int_{f(A)}\left(\sum_{x\in f^{-1}(y)}u(x)\right)~dG(y).
\]
\end{itemize}
\end{theorem}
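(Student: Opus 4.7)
The plan is to reduce the theorem to Federer's classical Area Formula (Theorem \ref{ArF}) by expressing $dF$ and $dG$ in terms of $d\mathscr{L}$ and $d\mathcal{H}^m$ and absorbing the resulting density factors into the integrand via the chain rule for jacobians. First I would observe that part (a) is exactly the $u=\chi_A$ case of part (b), so it suffices to prove (b).

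The essential pointwise identity I would establish is
\[
J(d_xf;F,G)\cdot F_x(e_1\wedge\cdots\wedge e_m)=J(d_xf;\mathscr{L},\mathcal{H}^m)\cdot G_{f(x)}\bigl(\xi_{f(x)}\bigr),
\]
valid at almost every $x\in A$, where $\xi_{f(x)}$ is the Euclidean unit vector in $\Lambda^m T_{f(x)}f(A)$ (in the orientation fixed before the theorem). This follows from two applications of Proposition \ref{properties}(b). Factoring $d_xf$ as $d_xf\circ\mathrm{id}$ on the source with $\mathrm{id}:(\mathbb{R}^m,\mathscr{L})\to(\mathbb{R}^m,F)$ gives
\[
J(d_xf;\mathscr{L},G)=J(\mathrm{id};\mathscr{L},F)\cdot J(d_xf;F,G)=F_x(e_1\wedge\cdots\wedge e_m)\cdot J(d_xf;F,G),
\]
while factoring $d_xf$ as $\mathrm{id}\circ d_xf$ on the target with $\mathrm{id}:(T_{f(x)}f(A),\mathcal{H}^m)\to(T_{f(x)}f(A),G)$ yields
\[
J(d_xf;\mathscr{L},G)=J(d_xf;\mathscr{L},\mathcal{H}^m)\cdot G_{f(x)}(\xi_{f(x)}).
\]
Equating the two expressions for $J(d_xf;\mathscr{L},G)$ is the identity.

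Substituting this and using $dF=F_x(e_1\wedge\cdots\wedge e_m)\,d\mathscr{L}$, the left-hand side of (b) becomes
\[
\int_A u(x)\,G_{f(x)}(\xi_{f(x)})\,J(d_xf;\mathscr{L},\mathcal{H}^m)\,d\mathscr{L}(x).
\]
I would then apply Theorem \ref{ArF}(b) to $\tilde u(x):=u(x)\,G_{f(x)}(\xi_{f(x)})$, which is $\mathscr{L}$-measurable by Rademacher's theorem combined with the $\mathcal{H}^m$-measurability of $f(A)$ established above. This produces
\[
\int_{\mathbb{R}^n}\sum_{x\in f^{-1}(y)\cap A}u(x)\,G_{f(x)}(\xi_{f(x)})\,d\mathcal{H}^m(y).
\]
Since the factor $G_{f(x)}(\xi_{f(x)})$ depends only on $y=f(x)$ (it is the Radon--Nikodym density $dG/d\mathcal{H}^m$ on $f(A)$), it pulls out of the inner sum and recombines with $d\mathcal{H}^m$ to reconstitute $dG$, giving the right-hand side $\int_{f(A)}\sum_{x\in f^{-1}(y)\cap A}u(x)\,dG(y)$.

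The main delicate point will be the non-symmetric case: the positive vector $\xi_y$ is defined via $\Lambda^m d_xf(e_1\wedge\cdots\wedge e_m)$ for a choice of preimage $x\in f^{-1}(y)$, and one must verify that different preimages induce the same orientation at $\mathcal{H}^m$-a.e.~$y\in f(A)$, so that $G_{f(x)}(\xi_{f(x)})$ is independent of $x$ and may legitimately be pulled out of the inner sum. In the symmetric case this is automatic. The passage from nonnegative $u$ to $F$-integrable $u$ is the standard monotone/dominated-convergence argument, and the final measurability and integrability checks are routine.
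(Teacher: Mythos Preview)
Your proposal is correct and follows essentially the same route as the paper: both derive the pointwise identity $J(\id;\mathscr{L},F)\cdot J(d_xf;F,G)=J(d_xf;\mathscr{L},\mathcal{H}^m)\cdot J(\id_{\Imag d_xf};\mathcal{H}^m,G)$ via Proposition~\ref{properties}(b) and then invoke the classical Area Formula. The only cosmetic difference is that the paper proves (a) first and obtains (b) from (a) by writing $u=\sum_i\frac{1}{i}\chi_{A_i}$, whereas you prove (b) directly and recover (a) as the case $u=\chi_A$; the delicate orientation issue you flag in the non-symmetric case is not resolved any more explicitly in the paper than in your sketch.
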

\begin{proof}  $(a)$ Notice first that by (\ref{LipH})   both sides vanish on the set of points where $df$ does not have maximal rank.  It is therefore enough to consider
 \[A\subset \{x~|~df_x \mbox{ exists and has maximal rank}\}.\] 
In this situation for $y=f(x)$ we have $T_yf(A)=d_xf(\mathbb{R}^n)$, (at the points where the tangent space exists). The next relation together with  part $(b)$ of Theorem \ref{ArF} finishes the proof.
\[ J(\id_{T_x\mathbb{R}^m}; \mlH^m,F)\cdot J(d_xf; F,G) =J(d_xf;\mlH^m,\mlH^m)\cdot J({\id_{\Imag{d_xf}}}; \mlH^m,G)
\]
Due to Proposition \ref{properties}, part (b), both sides equal $J(d_xf; \mlH^m, G)$.

$(b)$ For $u$ non-negative, write $u=\sum_{i=1}^{\infty}\frac{1}{i}\chi_{A_i}$ for appropriately chosen measurable sets $A_i$ (see Theorem 7, Sect. 1.1.2 in \cite{EG}) and use the Monotone Convergence Theorem.
\end{proof}
The situation is not much different for the coarea formula (Theorem 3.2.11 in \cite{F}). However, this time we have to deal with $3$ densities. 
We state it first and then explain.
\begin{theorem}\label{coarea} Suppose $f:\mathbb{R}^{n+m}\ra\mathbb{R}^n$ is a Lipschitz function  and let $F$, $\mu$ be volume densities on $\mathbb{R}^{n+m}$ of degree $m$ and $m+n$ respectively. Let $\lambda$ be an $n$ volume density on $\mathbb{R}^n$ and let $A\subset\mathbb{R}^{n+m}$ be a Lebesgue measurable set.  Then
\begin{itemize}
\item[(a)] \[\int_A C(d_xf;\lambda^*,F_{\mu}^*)~d\mu(x)=\int_{\mathbb{R}^n}\vol_F(A\cap f^{-1}(y))~d\lambda(y).\]
\item[(b)] If $g:\mathbb{R}^{n+m}\ra\mathbb{R}^n$ is either a non-negative measurable function or $g$ is $\mu$-integrable then
\[ \int_A g(x)\cdot C(d_xf;\lambda^*,F_{\mu}^*)~d\mu(x)=\int_{\mathbb{R}^n}\left(\int_{A\cap f^{-1}(y)}g(x)~dF\right)~d\lambda(y).
\]
\end{itemize}
\end{theorem}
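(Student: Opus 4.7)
My plan is to reduce Theorem~\ref{coarea} to Federer's classical coarea formula via a pointwise algebraic identity that expresses the generalised cojacobian $C(d_xf;\lambda^*,F_{\mu}^*)$ in terms of the euclidean coarea factor $C^{\mathrm{euc}}(d_xf):=\sqrt{\det (d_xf)(d_xf)^*}$. Write $\mlH^{k}$ for the euclidean $k$-density on whichever space is relevant. Because $\mu$, $\lambda$ and $F$ are continuous positive densities, their Radon--Nikodym derivatives with respect to the euclidean references are given by jacobians of the identity: $d\mu/d\mlH^{n+m}=J(\id;\mlH^{n+m},\mu)$, $d\lambda/d\mlH^n=J(\id;\mlH^n,\lambda)$, and --- restricted to the $m$-plane $\Ker d_xf$, which equals the tangent space $T_xf^{-1}(f(x))$ at $\mlH^{n+m}$-a.e.\ regular point --- $dF/d\mlH^m=J(\id_{\Ker d_xf};\mlH^m,F)$.

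The heart of the argument would be the pointwise identity
\[
C(d_xf;\lambda^*,F_{\mu}^*)\cdot J(\id;\mlH^{n+m},\mu)_x
= J(\id;\mlH^n,\lambda)_{f(x)}\cdot C^{\mathrm{euc}}(d_xf)\cdot J(\id_{\Ker d_xf};\mlH^m,F)_x,
\]
valid wherever $d_xf$ exists and is surjective (both sides vanish otherwise). To establish it I would first apply Proposition~\ref{properties}\,(d) to the factorisation $d_xf=\id_{\mathbb{R}^n}\circ d_xf$ with $\mlH^n$ inserted on the middle copy of $\mathbb{R}^n$: this peels off the factor $J(\id;\mlH^n,\lambda)_{f(x)}$ and leaves the cojacobian $C(d_xf;(\mlH^n)^*,F_{\mu}^*)$. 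Then I would apply Proposition~\ref{properties}\,(e) with $A=\id_{\mathbb{R}^{n+m}}$ (using $\mu$ and auxiliary $\mlH^{n+m}$) and $T=d_xf$ (using $F$ and auxiliary $\mlH^m$); this rearranges the residual cojacobian into $C^{\mathrm{euc}}(d_xf)$ together with the ratios $J(\id_{\Ker d_xf};\mlH^m,F)_x$ and $1/J(\id;\mlH^{n+m},\mu)_x$.

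With the identity at hand, converting $d\mu$ to $J(\id;\mlH^{n+m},\mu)\,d\mlH^{n+m}$ on the left-hand side of~(a) yields
\[
\int_A C(d_xf;\lambda^*,F_{\mu}^*)\,d\mu
= \int_A J(\id;\mlH^n,\lambda)_{f(x)}\,C^{\mathrm{euc}}(d_xf)\,J(\id_{\Ker d_xf};\mlH^m,F)_x\,d\mlH^{n+m}(x),
\]
and the classical Federer coarea formula applied to the non-negative measurable integrand $h(x)=J(\id;\mlH^n,\lambda)_{f(x)}\cdot J(\id_{\Ker d_xf};\mlH^m,F)_x$ rewrites the right-hand side as
\[
\int_{\mathbb{R}^n} J(\id;\mlH^n,\lambda)_y\!\left(\int_{A\cap f^{-1}(y)} J(\id_{T_xf^{-1}(y)};\mlH^m,F)_x\,d\mlH^m(x)\right) d\mlH^n(y),
\]
which is precisely $\int_{\mathbb{R}^n}\vol_F(A\cap f^{-1}(y))\,d\lambda(y)$; this proves~(a). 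Part~(b) then follows by the usual approximation scheme: apply~(a) with $A\cap B$ in place of $A$ to get the identity for $g=\chi_B$, extend by linearity to non-negative simple functions, by monotone convergence to non-negative measurable functions, and finally by splitting positive and negative parts to $\mu$-integrable $g$.

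The main obstacle will be the bookkeeping behind the pointwise identity: Proposition~\ref{properties} has to be invoked twice, and one must carefully distinguish densities from codensities, track the ambient space on which each restriction lives, and --- in the non-symmetric oriented case --- check that the fiber-last orientation convention implicit in the definition of $\vol_F(A\cap f^{-1}(y))$ matches the orientations encoded in $F_{\mu}^*$ and in the cojacobian formalism, so that no sign discrepancies arise.
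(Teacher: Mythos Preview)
Your proposal is correct and follows essentially the same route as the paper's proof: both reduce to the points where $d_xf$ has maximal rank, establish the identical pointwise algebraic identity relating $C(d_xf;\lambda^*,F_\mu^*)\cdot J(\id;\mlH^{n+m},\mu)$ to the euclidean coarea factor times the two correction jacobians, derive it from Proposition~\ref{properties}(d) and~(e), and then invoke Federer's classical coarea formula. The only cosmetic difference is the order in which (d) and~(e) are applied---you peel off $\lambda$ first via~(d) and then use~(e) with target density $\mlH^n$, whereas the paper keeps $\lambda$ through the~(e) step and applies~(d) on the euclidean side---but the resulting identity and the rest of the argument are the same.
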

 It is essential for the good definition of the right hand side  that $A\cap f^{-1}(y)$ is $\mlH^m$- countably rectifiable for $\mathscr{L}^n$ almost all $y\in\mathbb{R}^n$ (this is  Theorem 3.2.15 in \cite{F}) and that $y\ra \mlH^m(A\cap f^{-1}(y))$ is measurable (check Lemma 1, Sect. 3.4 in \cite{EG}). There is one other issue: how to define an orientation on $A\cap \pi^{-1}(y)$ in the case when $F$ is non-symmetric. We use the weak form of Sard's Theorem which is a consequence of the Coarea formula (Theorem 3.2.11 in \cite {F}) and states that
 \[ \mathcal{H}^m(\Crit{f}\cap f^{-1}(y))=0\quad\quad \mathcal{L}^n-a.e. \;y\in \bR^n.
 \]
Here $\Crit{f}$ represent all those points where $d_xf$ is not surjective. So with the exception of an $\mlH^m$-negligible set every point $p\in f^{-1}(y)$ has a tangent space  $\Ker d_xf$. One gives  to this space the orientation using the map $d_xf$ and the canonical orientations of $\bR^n$ and $\bR^{n+m}$ (fiber last convention). 
\begin{proof} $(a)$
We are allowed again to make the simplifying assumption that the differential of $f$ has maximal rank at all points of $A$. This is because both sides vanish on the rest of the points since the integrals are zero with respect to the Hausdorff measure as in the proof of the standard Coarea formula. 

We claim now that
\begin{equation*} C(d_xf;\lambda^*,F_{\mu}^*)\cdot J(\id_{T_x\mathbb{R}^{n+m}}; \mathscr{L},\mu)\stackrel{(\star)}{=}C(d_xf; \mathscr{L}^*,\left(\mlH^m\right)^*_{\mathscr{L}})\cdot J(\id_{\Ker{d_xf}}; \mlH^m,F)\cdot J(\id_{\Imag{ d_xf}};\mathscr{L},\lambda).
\end{equation*}
This happens on one hand because part $(d)$ of Proposition \ref{properties} implies that 
\[ C(d_xf; \mathscr{L}^*,\mlH^*_{\mathscr{L}})\cdot J(\id_{\Imag{ d_xf}};\mathscr{L},\lambda)=C(d_xf;\lambda^*,\left(\mlH^m\right)^*_{\mathscr{L}}),
\]
while part $(e)$ of the same proposition implies that
\[C(d_xf;\lambda^*,\left(\mlH^m\right)^*_{\mathscr{L}})\cdot J(\id_{\Ker{d_xf}}; \mlH^m,F)=C(d_xf;\lambda^*,F_{\mu}^*)\cdot J(\id_{T_x\mathbb{R}^{n+m}}; \mathscr{L},\mu).
\]
One can use the standard Coarea formula to finish the proof since
\[ \int_A C(d_xf;\lambda^*,F_{\mu}^*)~d\mu(x)=\int_AC(d_xf;\lambda^*,F_{\mu}^*)\cdot J(\id_{T_x\mathbb{R}^{n+m}}; \mathscr{L},\mu)~d\mathscr{L}=\]\[=\int_AC(d_xf; \mathscr{L}^*,\left(\mlH^m\right)^*_{\mathscr{L}})\cdot J(\id_{\Ker{d_xf}}; \mlH^m,F)\cdot J(\id_{\Imag{ d_xf}};\mathscr{L},\lambda)~d\mathscr{L}=\]\[=\int_{\mathbb{R}^n}\left(\int_{A\cap f^{-1}(y)} J(\id_{\Ker{d_xf}}; \mlH^m,F)~d\mlH^m\right)\cdot J(\id_{\Imag{ d_xf}};\mathscr{L},\lambda)~d\mathscr{L}(y)=\]\[=
\int_{\mathbb{R}^n}\left(\int_{A\cap f^{-1}(y)}~dF\right)~d\lambda.
\]

$(b)$ The same ideas as for part $(b)$ in the Area Formula work here.
\end{proof}
\begin{remark} The Theorem \ref{coarea} was stated for symmetric densities. We can allow for $F$ to be non-symmetric in which case $\vol_F$ is defined using the orientation, as in the comments preceding the proof.
\end{remark}
We can improve now Proposition \ref{coFub} by removing the submersion condition. Notice that on a general smooth manifold $P$ it makes sense to talk about measurable sets. By this we understand that the set is measurable in every chart of a fixed atlas with respect to the Lebesgue  measure induced by the chart. Clearly this does not depend on the particular atlas used. Moreover this is equivalent to saying that the set is measurable with respect to any top degree volume density on the manifold. 
\begin{corollary}\label{funcman} Let $\pi:P\ra B$ be a $C^1$ map between smooth manifolds with $\dim{P}=m+n$ and $\dim{B}=n$. Let $F$ and $\mu$ be volume densities of degrees $m$ and $n+m$ on $P$ and $\lambda$ be an $n$-density on $B$. Let $A$ be a measurable set. Then
\begin{itemize}
\item[(a)] \[\int_A C(d_x\pi;\lambda^*,F_{\mu}^*)~d\mu(x)=\int_{B}\vol_F(A\cap \pi^{-1}(y))~d\lambda(y).
\]
\item[(b)] If $g:P\ra B$ is a measurable function such that $g$ is $\mu$ integrable then
\[ \int_A g(x)\cdot C(d_x\pi;\lambda^*,F_{\mu}^*)~d\mu(x)=\int_{B}\left(\int_{A\cap \pi^{-1}(y)}g(x)~dF\right)~d\lambda(y).
\]
\end{itemize}
\end{corollary}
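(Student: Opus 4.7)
The plan is to reduce Corollary \ref{funcman} to Theorem \ref{coarea} by means of a countable atlas argument, exploiting the fact that $C^1$ maps between open subsets of Euclidean spaces are locally Lipschitz and that the integrands are defined intrinsically at each point. Since both sides of (a) and (b) depend only on pointwise data (and countably additively on $A$), the main task is to verify a local version in coordinate charts and then assemble.

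First I would fix a countable atlas $\{(U_i,\varphi_i)\}$ of $P$ with $\varphi_i:\Omega_i\to U_i$ and $\Omega_i\subset\bR^{n+m}$ open, and a countable atlas $\{(V_j,\psi_j)\}$ of $B$ with $\psi_j:\Delta_j\to V_j$, refining so that for each $i$ there exists $j(i)$ with $\pi(U_i)\subset V_{j(i)}$. Choose a countable disjoint Borel decomposition $A=\bigsqcup_i A_i$ with $A_i\subset U_i$. Since each $A_i$ is $\mu$-measurable and the conclusion is countably additive on both sides, it suffices to prove the identity for a single $A_i$, and in fact, after a further countable decomposition, for $A_i$ contained in a compact subset $K\subset U_i$ on which $\tilde\pi_i:=\psi_{j(i)}^{-1}\circ\pi\circ\varphi_i:\Omega_i\to\Delta_{j(i)}$ is Lipschitz (this is automatic for $C^1$ maps restricted to compact subsets).

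Next I would transport everything to the Euclidean side: let $\tilde F:=\varphi_i^*F$, $\tilde\mu:=\varphi_i^*\mu$ and $\tilde\lambda:=\psi_{j(i)}^*\lambda$. The key naturality claim is that
\[
C(d_x\pi;\lambda^*,F_\mu^*)\;d\mu(x) \;=\; \big(\varphi_i\big)_*\!\left(C(d_{\tilde x}\tilde\pi_i;\tilde\lambda^*,\tilde F_{\tilde\mu}^*)\;d\tilde\mu(\tilde x)\right),
\]
and similarly that the fiber volume $\vol_F(A_i\cap\pi^{-1}(z))$ pulls back to $\vol_{\tilde F}(\varphi_i^{-1}(A_i)\cap\tilde\pi_i^{-1}(\psi_{j(i)}^{-1}(z)))$. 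Both of these follow from the chain-rule-type identities in Proposition \ref{properties}, once one observes that pulling back a density along a diffeomorphism is natural with respect to the constructions of jacobian, cojacobian, and induced codensities (in particular, the pairing $F_\mu^*$ is functorial because both $F$ and $\mu$ are pulled back together). Granted this, Theorem \ref{coarea} applied to $\tilde\pi_i$ and $\varphi_i^{-1}(A_i)$ yields the local version of (a), and pushing forward the outer integral by $\psi_{j(i)}$ (i.e., a standard change of variables on $B$) gives the identity restricted to $A_i$. Summing on $i$ produces (a); part (b) follows by the usual approximation, first for $g=\chi_E$ using (a) applied to $E\cap A$, then by monotone and dominated convergence.

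The main obstacle I expect is bookkeeping, not substance: verifying the naturality equation above cleanly, especially when $F$ is non-symmetric, which forces me to keep track of the induced orientations on the fibers (the "fiber last" convention of Remark following Proposition \ref{coFub}) and to work only with orientation-preserving charts on the oriented pieces. The other point that requires a word of justification is that $A_i\cap\pi^{-1}(z)$ has a well-defined $F$-volume for $\lambda$-a.e.\ $z$; this is inherited from the Lipschitz case, using the weak Sard statement $\mathcal{H}^m(\Crit(\tilde\pi_i)\cap\tilde\pi_i^{-1}(y))=0$ recalled before the proof of Theorem \ref{coarea}, so that almost every fiber is tangent-equipped via $\Ker d\pi$ intrinsically, consistent across charts.
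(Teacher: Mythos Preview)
Your proposal is correct and follows essentially the same localization-to-Euclidean-charts strategy as the paper: both arguments reduce to Theorem~\ref{coarea} via local charts on $P$ and $B$, using that a $C^1$ map is Lipschitz on compact subsets. The only cosmetic difference is that the paper employs partitions of unity on $B$ and then on $P$ (so part (b) is proved directly by summing $g\rho_i$), whereas you use a disjoint Borel decomposition of $A$ and derive (b) from (a) by approximation; both are standard and equivalent ways to patch the local result.
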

\begin{proof} By choosing a partition of unity $(D_i,\tau_i)$ on $B$ and using the result for $\pi\bigr|_P$ where $P=\pi^{-1}(D_i)$ and $g=\tau_i$ one is reduced to prove the claim for $B=\mathbb{R}^n$.

Choose a covering of $P$ with open sets $U_i$ diffeomorphic with $\mathbb{R}^{n+m}$ and a partition of unity $\rho_i$ subordinate to $U_i$. Since $\pi$ is $C^1$, by  shrinking the $U_i$'s if necessary, we can assume that $\pi\bigr|_{U_i}$ is Lipschitz (as a map $\mathbb{R}^{n+m}\ra\mathbb{R}^n$).  One then uses Theorem \ref{coarea} for $\pi\bigr|_{U_i}$ and $h=g\rho_i$ and sums up.
\end{proof}

\begin{remark}\label{funcmanrem} An important particular case of Corollary \ref{funcman} occurs when $\pi=f:P\ra \mathbb{R}$, $\lambda=dt$, $\mu=|\Omega|$ a volume form and $F$ arises by "contracting" the volume form $\Omega$   with a convex $1$-density $\bar{F}$, meaning that $F=\bar{F}^*\circ \iota_{\Omega}^*$ where $\bar{F}^*$ is the functional dual of $\bar{F}$.  In this case, the cojacobian is $\bar{F}^*(df)=\bar{F}(\nabla f)$ (see Example \ref{ex2}) and one recovers Shen's Theorem 3.3.1 in \cite{Shen}.  
\end{remark}

\section{The anisotropic outer Minkowski content and the Sobolev inequality}\label{sec4}
The codensities introduced in Section \ref{sec1} are,  at least  in degree $1$, important objects  in anisotropic geometry (see \cite{An}, \cite{P} and \cite{PK} and the references therein  for more details). We review the main concepts following along the lines of Andrews presentation in \cite{An}.    The starting point is an \emph{oriented} $(n+1)$-dimensional vector space $V$, endowed with  a volume form $\Omega\in\Lambda^{n+1}V^*$ compatible with the orientation of $V$ and a degree $n$ volume density $F$, which in general is not assumed to be origin symmetric but is assumed to be extendibly convex, i.e. it is the restriction of a convex, positively homogeneous function $F:\Lambda^nV\ra\mathbb{R}$. 

 We have several objects associated to $F$ and $\Omega$. On one hand, $F$ and $\Omega$ induce a Minkowski functional $\bar{F}^*$ on $V^*$ (see Proposition \ref{altcodens}) as follows:
\[ \bar{F}^*:=F\circ \left(\iota_{\Omega}^*\right)^{-1}.
\]
One relates this to what was done in Section \ref{sec1} as follows. Start with a parametric integrand $F$  of degree $n$ and a volume form $\Omega$. We can easily modify the Definition-Lemma \ref{codens}, asking for the basis $\{v_1\ldots v_m,w_1,\ldots w_n\}$ to be positively oriented in order to obtain a well-defined  parametric integrand  $F_{\Omega}^*$ on $\Lambda^mV^*$  which coincides with $\bar{F}$ when $m=1$ since  the proof of Proposition \ref{altcodens} is the same. 
 
We will stay with the case $m=1$. The set $W_F:=\{v\in V~|~\bar{F}\leq 1\}$ is called the Wulff shape, where 
\[ \bar{F}(v)=\sup_{\bar{F}^*(f)\leq 1} f(v).
\] 
In other words, $\bar{F}^*$ is the support function of $W_F$ (see \cite{Sc}). To be more precise, suppose $V$ is endowed with an inner product  $\langle\cdot,\cdot\rangle$  such that $\Omega$ is the volume form induced by $\langle\cdot,\cdot\rangle$.  Let $h_{W_F}$ be the support function of $W_F$ on $V$. Then
\begin{equation}\label{Fh} h_{W_F}(u)=\bar{F}^*(\langle u,\cdot \rangle)
\end{equation}

Let us mention next a multiplication property of the densities involved. Suppose that $v_1\ldots v_n$ are $n$ linearly independent vectors and $w$ is a vector such that the hyperplane spanned by $\{v_1,\ldots,v_n\}$ is supporting for $\lambda W_F$ where $\lambda>0$ is such that $w\in \partial (\lambda W_F)$. Such a vector is said to be anisotropically orthogonal to the hyperplane spanned by $v_1,\ldots, v_n$.   Then
\begin{equation}\label{multprop} |\Omega|(w\wedge v_1\wedge\ldots\wedge v_n)= \bar{F}(w)\cdot F(v_1\wedge\ldots\wedge v_n).
\end{equation}
Indeed we can reinterpret this equality as
\begin{equation}\label{FsFb} \bar{F}(w) \bar{F}^*(w^*)=1,
\end{equation}
where $w^*$ is the dual of $w$ with respect to the basis $w, v_1,\ldots , v_n$. Let $\tilde{w}:=\frac{1}{\lambda} w\in W_F$. We will prove that
\begin{equation}\label{bFw} \bar{F}^*(w^*)=w^*(\tilde {w}),
\end{equation}
from  which (\ref {FsFb}) follows since $\bar{F}(w)=\lambda$, by definition. We show in fact that $\tilde{w}$ realizes the norm of $w^*$, which equals $F^*(w^*)$.

The fact that $H:=\langle v_1,\ldots v_n\rangle$ is supporting for $W_F$ at $\tilde{w}$ can be rephrased as saying that
\[\bar{F}(\tilde {w}+v)\geq \bar{F}(\tilde{w}),\quad\quad \forall v\in H. 
\]
Every $z\in W_F$ can be written as $z=a\tilde{w}+v$, with $v\in H$. Since we are looking for those $z$ that realize $\sup_{z\in W_F} w^*(z)$, we can assume that $a>0$. From
\[ 1\geq \bar{F}(z)=a \bar{F}(\tilde{w}+\frac{1}{a}v)\geq a\bar{F}(\tilde{w})=a,
\]
we conclude that $w^*(z)\leq w^*(\tilde{w})$ for all $z\in W_F$, hence (\ref{bFw}).
\begin{remark} If $h_{W_F}$ happens to be smooth everywhere then for each hyperplane $H$ there are exactly two anisotropic normal directions pointing to different half-spaces. One can single out a unit anisotropic normal $n$ for an oriented $H$ by requiring that $n\in\partial W_F$ and $n\wedge v_1\wedge\ldots \wedge v_n$ be positively oriented in $V$ for $v_1\wedge\ldots\wedge v_n$ positively oriented in $H$.
\end{remark}

\begin{lemma} Let $U\subset V$ be an oriented subspace of dimension $n$. Then 
\[ J(\id_{U};\mathcal{H}^n\bigr|_{U},F\bigr|_{U}) = h_{W_{F}}(\nu_U),
\]
where $\nu_U$ is the unit  normal to $U$ with respect to the inner product, such that $\nu, u_1,\ldots, u_n$ is a positively oriented basis of $V$  for every oriented basis $u_1,\ldots, u_n$ of $U$. In particular if $\Sigma$ is an oriented $C^1$ hypersurface in $V$, then
\[\int_\Sigma~dF=\int_{\Sigma}h_{W_{F}}(\nu_\Sigma)~d\mathcal{H}^n.
\]
\end{lemma}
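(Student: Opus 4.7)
The plan is to chase the definitions through the Hodge-type isomorphism $\iota_\Omega^*$ of Section \ref{sec1}. The key observation is that applied to an oriented orthonormal basis of a hyperplane, $\iota_\Omega^*$ produces exactly the inner-product dual of the unit normal. Everything else then follows from Proposition \ref{altcodens} and relation (\ref{Fh}).

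More concretely, I would first pick an oriented orthonormal basis $u_1,\ldots,u_n$ of $U$, so that $\xi:=u_1\wedge\cdots\wedge u_n$ satisfies $\mathcal{H}^n\bigr|_U(\xi)=1$. By the definition of the jacobian (Definition \ref{jacdef}),
\[
J(\id_U;\mathcal{H}^n\bigr|_U,F\bigr|_U)=F(\xi).
\]
Next I would compute $\iota_\Omega^*(\xi)\in V^*$. Because $\nu_U,u_1,\ldots,u_n$ is by hypothesis an oriented orthonormal basis of $V$ and $\Omega$ is the inner-product volume form, $\Omega(\nu_U\wedge u_1\wedge\cdots\wedge u_n)=1$, while $\Omega(u_j\wedge u_1\wedge\cdots\wedge u_n)=0$ for every $j$. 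Consequently $\iota_\Omega^*(\xi)$ is precisely the element of $V^*$ sending $\nu_U\mapsto 1$ and $u_j\mapsto 0$, i.e.\ $\iota_\Omega^*(\xi)=\langle\nu_U,\cdot\,\rangle$.

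The remaining step just assembles the pieces. Proposition \ref{altcodens} in the oriented case gives $\bar F^*=F_\Omega^*=F\circ(\iota_\Omega^*)^{-1}$, hence
\[
\bar F^*(\langle\nu_U,\cdot\,\rangle)=F(\xi)=J(\id_U;\mathcal{H}^n\bigr|_U,F\bigr|_U),
\]
and formula (\ref{Fh}) identifies the left-hand side with $h_{W_F}(\nu_U)$. This proves the first equality.

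The hypersurface integral follows by applying the pointwise identity to $U=T_p\Sigma$ at $\mathcal{H}^n$-almost every $p$. Writing $\xi_p\in\Lambda^n T_p\Sigma$ for the positively oriented Euclidean-unit multivector, the measure-theoretic definition (\ref{defF}) from Section \ref{Sec2} gives
\[
\int_\Sigma dF=\int_\Sigma F_p(\xi_p)\,d\mathcal{H}^n(p)=\int_\Sigma h_{W_F}(\nu_\Sigma)\,d\mathcal{H}^n.
\]
There is no substantial obstacle here: the only point that requires care is the orientation bookkeeping in the definition of $\iota_\Omega^*$ (note the reversed order convention $\eta\wedge\theta$ rather than $\theta\wedge\eta$), which is exactly what makes $\iota_\Omega^*(\xi)$ equal to $\langle\nu_U,\cdot\,\rangle$ rather than $(-1)^n\langle\nu_U,\cdot\,\rangle$, consistent with the discussion following (\ref{orbasis}).
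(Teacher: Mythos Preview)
Your proposal is correct and follows essentially the same route as the paper: both compute $F$ on an oriented $n$-vector spanning $U$, identify the result with $\bar F^*$ evaluated at the covector $\langle\nu_U,\cdot\rangle$, and then invoke (\ref{Fh}). The only cosmetic difference is that you fix an orthonormal basis from the outset (so the $\mathcal{H}^n$-normalization is $1$) and phrase the key identification via Proposition \ref{altcodens} and $\iota_\Omega^*$, whereas the paper works with an arbitrary oriented basis and writes the same step as the codensity quotient $F/\Omega(\nu_U\wedge\cdots)$.
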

\begin{proof} For $\{v_1,\ldots, v_n\}$ an oriented basis of $U$ one has
\[J(\id_{U};\mathcal{H}^n\bigr|_{U},F\bigr|_{U})=\frac{F(v_1\wedge\ldots\wedge v_n)}{\mathcal{H}^n(v_1\wedge\ldots\wedge v_n)}=\frac{F(v_1\wedge\ldots\wedge v_n)}{\Omega(\nu_U\wedge v_1\wedge\ldots\wedge v_n)}=\bar{F}^*(\nu_U^*)=\]\[ =\bar{F}^*(\langle\nu_U,\cdot\rangle)=h_{W_{F}}(\nu_U).\]
We used the fact that $\nu_U^*$ which is the element of the dual basis to $\{\nu_U,v_1,\ldots,v_n\}$ that returns $1$ when evaluated on $\nu_U$ is in fact equal to $\langle \nu_U,\cdot\rangle$ independently of the choice of $v_1,\ldots, v_n$.
\end{proof}
\begin{remark} If $\Sigma$ is the smooth boundary of an open set in $V$ then we use the "outer normal first" convention to orient $\Sigma$ and in this case $\nu$ of the previous lemma is the unit outer normal.
\end{remark}
Let $B$ be a bounded subset of $V$. All the volumes of the sets that appear below are computed with respect to $\Omega$. The anisotropic outer Minkowski content is the quantity
\[ \SM_F(B)=\lim_{t\searrow 0}\frac{\vol(B+tW_F)-\vol(B)}{t},
\]
when the limit exists. 
\begin{theorem}\label{anisoMink} Let $B$ be a bounded domain with $C^2$ boundary. Then
\begin{equation}\label{oMc}\SM_F(B)=\int_{\partial B}~dF.
\end{equation}
\end{theorem}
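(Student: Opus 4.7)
The idea is to realize $(B+tW_F)\setminus B$ as the image of the \emph{anisotropic normal flow} of $\partial B$ and then apply the change of variables formula from Proposition \ref{cov}. Throughout, I assume enough regularity of $W_F$ (e.g.\ that $h_{W_F}$ is $C^1$, so that the anisotropic unit outer normal is single-valued and continuous); the general case can be handled by an approximation argument at the end.

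For each $p\in\partial B$, let $n_F(p)\in\partial W_F$ denote the unique point such that the tangent hyperplane to $W_F$ at $n_F(p)$ is parallel to $T_p\partial B$ and $n_F(p)$ lies on the outward side of $T_p\partial B$ (in the sense that $n_F(p)\wedge v_1\wedge\cdots\wedge v_n$ is positively oriented in $V$ for any oriented basis $v_1,\ldots,v_n$ of $T_p\partial B$, with $\partial B$ oriented by the outer--normal--first convention). Consider the map
\[
\Phi:\partial B\times [0,\infty)\longrightarrow V,\qquad \Phi(p,s)=p+s\,n_F(p).
\]
The first step is to show that, for $t$ small enough, $\Phi$ restricts to a diffeomorphism from $\partial B\times[0,t]$ onto the closure of $(B+tW_F)\setminus B$. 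Injectivity and surjectivity come from the fact that every $q\in(B+tW_F)\setminus\overline{B}$ admits a unique anisotropic foot point $p\in\partial B$, realizing the supporting contact $(q-p)\in s\,\partial W_F$ with $s\in(0,t]$. That $d\Phi_{(p,0)}$ is an isomorphism follows from $\langle n_F(p),\nu_p\rangle=h_{W_F}(\nu_p)>0$, which makes $n_F(p)$ transverse to $T_p\partial B$; compactness of $\partial B$ and the inverse function theorem then give global injectivity for some uniform $t_0>0$.

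Once this is established, Proposition \ref{cov} (with source density $\mathcal{H}^n\otimes ds$ and target density $|\Omega|$) yields
\[
\vol\bigl((B+tW_F)\setminus B\bigr)=\int_{0}^{t}\!\!\int_{\partial B} J(d\Phi_{(p,s)})\,d\mathcal{H}^n(p)\,ds.
\]
The second step is to compute the Jacobian at $s=0$. Choosing an orthonormal basis $v_1,\ldots,v_n$ of $T_p\partial B$,
\[
J(d\Phi_{(p,0)})=|\Omega|\bigl(n_F(p)\wedge v_1\wedge\cdots\wedge v_n\bigr)=h_{W_F}(\nu_p)\,|\Omega|\bigl(\nu_p\wedge v_1\wedge\cdots\wedge v_n\bigr)=h_{W_F}(\nu_p),
\]
where I used the orthogonal decomposition $n_F(p)=h_{W_F}(\nu_p)\,\nu_p+(\text{tangential})$ together with \eqref{Fh}, and the tangential part drops out after wedging with $v_1\wedge\cdots\wedge v_n$. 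The continuity of $(p,s)\mapsto J(d\Phi_{(p,s)})$ on the compact set $\partial B\times[0,t_0]$ allows one to apply the fundamental theorem of calculus / dominated convergence to get
\[
\SM_F(B)=\lim_{t\searrow 0}\frac{1}{t}\int_{0}^{t}\!\!\int_{\partial B}J(d\Phi_{(p,s)})\,d\mathcal{H}^n(p)\,ds=\int_{\partial B} h_{W_F}(\nu_p)\,d\mathcal{H}^n(p),
\]
and the preceding lemma identifies the right hand side with $\int_{\partial B}dF$.

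The main obstacle is the first step: verifying that $\Phi$ is a diffeomorphism onto $(B+tW_F)\setminus\overline{B}$ and, in particular, that every point of the Minkowski tube is reached by the anisotropic normal flow exactly once. This requires both $C^2$ regularity of $\partial B$ (to control its second fundamental form and define a positive injectivity radius) and enough regularity of $\partial W_F$ (so that $n_F$ is well-defined and continuous). If $h_{W_F}$ is only assumed convex, one can approximate $W_F$ in the Hausdorff distance by convex bodies with $C^2$ support functions, apply the theorem to each approximation, and pass to the limit using the continuity of both sides of \eqref{oMc} under such approximations.
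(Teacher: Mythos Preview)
Your proposal is correct and follows essentially the same route as the paper: realize the tube $(B+tW_F)\setminus\overline B$ as the image of the anisotropic normal flow $\Phi(p,s)=p+s\,n_F(p)$, apply the change of variables from Proposition~\ref{cov}, evaluate the Jacobian at $s=0$, and handle a general $W_F$ by approximation with smooth support functions. The only cosmetic difference is that the paper uses the source density $\lambda\times F$ (so the Jacobian is $1$ at $s=0$, via the multiplication property~\eqref{multprop}) whereas you use $\mathcal{H}^n\otimes ds$ (so the Jacobian is $h_{W_F}(\nu_p)$); and in the approximation step the paper explicitly sandwiches $W_F$ between bodies $K_2\subset W_F\subset K_1$ to force the existence of the limit defining $\SM_F(B)$, a point you should make explicit since ``continuity of both sides'' presupposes that limit exists.
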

\begin{proof} We will assume first  that $W_F$ has support function of class $C^{\infty}$ (although $\partial W_F$ might still have singularities). This implies in particular that $W_F$ is strictly convex (see Corollary 1.7.3 in \cite{Sc}).

Consider the $\bar{F}$-distance from $\Sigma:=\partial B$ to a point $p$ outside $B$. This is by definition the infimum over the set of all piecewise smooth curves $\gamma:[0,1]\ra V$ that connect a point on $\Sigma$ with $p$ of the following integral
\[\int_{\gamma}~d\bar{F}=\int_0^1\bar{F}(\gamma'(t))~dt.
\]
Due to the fact that $\bar{F}$ is convex (and constant) the geodesics of this action functional are straight lines.  Consider the anisotropic Gauss map $n:\Sigma\ra \partial W_F$ which associates to a point $b$ the unique vector $n(b)\in \partial W_F$ such that $T_b\Sigma$ is a supporting hyperplane for $W_F$ at $n(b)$ and $n(b)\wedge \orient{T_b\Sigma}=\orient V$. With a choice of an inner product as above, this Gauss map can be seen as the composition of the standard Gauss map  with the gradient of the support function $h_{W_F}$.  Clearly $n$ is of class $C^1$ and due to the compactness of $\Sigma$ the map
\[ \phi:[0,t]\times\Sigma\ra V, \quad\quad (b,s)\ra b+sn(b)
\]
is injective for $t$ sufficiently small. This would be true for every non-vanishing vector field $n$ along $\Sigma$. The points in the image of $\phi$ are those such that the $\bar{F}$-distance from $\Sigma$ to them is at most $t$.  Moreover, for such a $t$ we have
\[\tilde{B}_t:=\overline{B}+tW_F\setminus \overline{B} =\Imag{\phi}.\]
Indeed the $\supset$ inclusion is trivial while $\subset$ uses the fact that a point $x+sy\in B+tW_F\setminus B$ with $x\in B$, $y\in W_F$ and $s\leq t$ is at an $\bar{F}$ distance at most $t$ from $\Sigma$.

Therefore $\phi$ is an oriented $C^1$ diffeomorphism onto $\tilde{B}_t$.  Using the density $\lambda\times F$ on $[0,t]\times \Sigma$ where $\lambda$ is the Lebesgue measure on the line and $\Omega$ on $\tilde{B}_t$, one gets:
\[ \vol(\tilde{B}_t)=\int_{[0,t]\times\Sigma} J(d\phi;\lambda\times F, \Omega)~d\lambda\otimes dF,
\]
If we can prove that for $t$ sufficiently small $|J(d\phi;\lambda\times F, \Omega)_{(s,b)}-1|\leq Mt$ for all $s\leq t$ for some $M>0$ then  
\[ \frac{1}{t}\left|\int_{[0,t]\times\Sigma}1-J(d\phi;\lambda\times F, \Omega)~d\lambda\otimes dF\right|\leq \int_{[0,t]\times\Sigma}M~d\lambda\otimes dF \stackrel{t\searrow 0}{\ra}0.
\]
and this would conclude the proof in this case. Now
 \begin{equation}\label{jaco} J(d\phi;\lambda\times F, \Omega)_{(s,b)}=\frac{|\Omega|(n(b)\wedge d_b\phi_s(v_1)\wedge\ldots \wedge d_b\phi_s(v_n))}{F(v_1\wedge\ldots\wedge v_n)},\end{equation}
 for every oriented basis $\{v_1,\ldots,v_n\}$ of $T_b\Sigma$. Observe that by the multiplication property (\ref{multprop}) for $s=0$, $J(d\phi;\lambda\times F, \Omega)=1$ and in fact the numerator is a polynomial in the variable $s$ and therefore $|J(d\phi;\lambda\times F, \Omega)_{(s,b)}-1|\leq Ms$ for $s$ small. 

In order to prove (\ref{oMc}) for a general convex body $W_F$ which contains the origin in the interior we use the fact that $W_F$ can be approximated with respect to the Hausdorff metric   from the inside and from the outside with convex bodies of the type used in the first part of the proof. This follows from Theorem 1.8.13,  Lemma 1.8.4 and Theorem 3.3.1 of \cite{Sc}. This means in particular that there exist  two  sequences  of support functions   that uniformly (over every compact subset of $V$) approximate $h_{W_F}$ from below and from above.  Let $K_2\subset W_F\subset K_1$ be two convex sets as above such that 
\begin{equation*}\sup_{\Sigma} |h_{K_1}-h_{K_2}|<\frac{\epsilon}{6\mathcal{H}^n(\Sigma)},
\end{equation*}
This gives 
\begin{equation} \label{ineq1}\left|\int_{\Sigma}h_{K_1}-h_{K_2}\right|<\frac{\epsilon}{6}\end{equation}
 and a similar inequality with $h_{W_F}$ instead of $h_{K_1}$. Now choose $\delta >0$ such that 
\begin{equation}\label{ineq2}\left|\frac{\vol(B+tK_i\setminus B)}{t}-\int_{\Sigma}h_{K_i}~d\mathcal{H}^n\right|<\frac{\epsilon}{12}, \quad\quad\forall 0<t<\delta\quad \forall  i\in\{1,2\} .
\end{equation}
Combining (\ref{ineq1}) and (\ref{ineq2}) we get that
\[  \left| \frac{V(B+tK_1\setminus B)-V(B+tK_2\setminus B)}{t}\right|<\frac{\epsilon}{3}.
\]
We put everything together
\[ \left|\frac{V(B+tW_F\setminus B)}{t}-\int_{\Sigma}h_{W_F}~d\mathcal{H}^n\right|\leq \left| \frac{V(B+tK_1\setminus B)-V(B+tK_2\setminus B)}{t}\right|+\]\[+\left|\frac{\vol(B+tK_2\setminus B)}{t}-\int_{\Sigma}h_{K_2}~d\mathcal{H}^n\right|+ \left|\int_{\Sigma} h_{K_2}-h_{W_F}~d\mathcal{H}^n\right|<\frac{\epsilon}{3}+\frac{\epsilon}{12}+\frac{\epsilon}{6}<\epsilon
\]
\end{proof}
Recall now the Brunn-Minkowski inequality (Theorem 3.2.41 in \cite{F}). Let $A,B\subset V$ be two non-empty sets. Then
\[ \vol(A+B)^{\frac{1}{n+1}}\geq \vol(A)^{\frac{1}{n+1}}+\vol(B)^{\frac{1}{n+1}}
\]
Taking $A=tW_F$ and $B$ a domain with $C^2$ boundary we get:
\[ \frac{\vol(B+tW_F)^{\frac{1}{n+1}}-\vol(B)^{\frac{1}{n+1}}}{t}\geq \vol{(W_F)}^{\frac{1}{n+1}}
\]
and letting $t\ra 0$ we arrive at the \emph{anisotropic isoperimetric inequality}:
\[ \frac{1}{n+1}\vol(B)^{-\frac{n}{n+1}}\int_{\partial B}~dF\geq \vol(W_F)^{\frac{1}{n+1}}.
\]
This can be rewritten in the more familiar form
\begin{equation}\label{isoper} \Area_F(\partial B)\geq (n+1)\vol{(W_F)}^{\frac{1}{n+1}}\vol{(B)}^{1-\frac{1}{n+1}}.
\end{equation}

 We emphasize that the anisotropic isoperimetric inequality compares the \emph {exterior} anisotropic area of a closed surface with a certain power of the volume of the enclosed region. If $W_F$ is not origin symmetric then the interior anisotropic area can be different.

The anisotropic Sobolev inequality follows from the anisotropic isoperimetric inequality using the same trick as in \cite{FF}. To wit, let $f$ be a smooth function with compact support and let 
\[ A_t:=|f|^{-1}(t,\infty),\quad\quad\quad B_t:=|f|^{-1}\{t\}
\]
By Sard's theorem $B_t$ is a smooth hypersurface that bounds the domain $A_t$ for almost all $t>0$.  At a regular point $x\in\mathbb{R}^{n+1}$, let $\tilde{\partial}_t$ be a lift of the canonical vector $1\in\mathbb{R}$, i.e. $d_x|f|(\tilde{\partial}_t)=1$.    Let  $v_1,\ldots, v_n$ be an oriented basis  of $T_xB_t$, orienting $B_t$ as a boundary of $A_t$. The cojacobian (see (\ref{coarea})) of $|f|:\mathbb{R}^{n+1}\ra \mathbb{R}$ is then:
\[ C(d_x|f|;\lambda^*,\bar{F}^*)=\frac{F(v_1\wedge\ldots\wedge v_n)}{|\Omega|(\tilde{\partial}_t\wedge v_1\wedge\ldots\wedge v_n)}=\frac{F(v_1\wedge\ldots\wedge v_n)}{\Omega(-\tilde{\partial}_t\wedge v_1\wedge\ldots\wedge v_n)}=\]\[=\bar{F}^*(-d_x|f|)\stackrel {(\ref{Fh})}{=}h_{W_F}(-\grad_x |f|).
\]
The reason for the appearance of the minus sign is that $\tilde{\partial}_t\wedge v_1\wedge\ldots\wedge v_n$ is negatively oriented as $\tilde{\partial}_t$ points towards the interior of $A_t$ and, as $t$ increases, $A_t$ decreases. We therefore have:
\[ \int_{V} h_{W_F}(-\grad_x |f|)~d\Omega=\int_0^\infty \Area_F(|f|^{-1}\{t\})~dt.
\]
It follows from (\ref{isoper}) that
\begin{equation}\label{ineq3} \int_{V} h_{W_F}(-\grad_x |f|)~d\Omega\geq (n+1)\vol{(W_F)}^{\frac{1}{n+1}}\int_0^{\infty} \vol(A_t)^{1-\frac{1}{n+1}}~dt.
\end{equation}
We quickly review a  result, proved at page 487-488 in \cite{FF}.
\begin{lemma} \[\int_0^\infty\vol(A_t)^{1-\frac{1}{n+1}}~dt\geq \left(\displaystyle\int_{V}|f|^{\frac{n+1}{n}}~d\Omega\right)^{\frac{n}{n+1}}.\]
\end{lemma}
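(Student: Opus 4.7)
The plan is to reduce the claim to the Minkowski integral inequality applied to the layer-cake decomposition of $|f|$. First, I would write
\[
|f|(x)=\int_0^{|f|(x)}\!dt=\int_0^\infty \chi_{A_t}(x)~dt,
\]
so that $|f|$ is represented as an integral of a one-parameter family of characteristic functions, indexed by the level $t>0$. Raising both sides to the power $p:=\frac{n+1}{n}$ and integrating over $V$ against $\Omega$ gives the $L^p$-norm of an integral.

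Next, I would apply the Minkowski integral inequality, which is valid because $p\geq 1$: for a nonnegative measurable function $g(x,t)$,
\[
\left(\int_V\left(\int_0^\infty g(x,t)~dt\right)^{p}d\Omega(x)\right)^{1/p}\leq \int_0^\infty\left(\int_V g(x,t)^{p}~d\Omega(x)\right)^{1/p}dt.
\]
Taking $g(x,t)=\chi_{A_t}(x)$ and using the identity $\chi_{A_t}^{p}=\chi_{A_t}$, the right-hand side becomes $\int_0^\infty \vol(A_t)^{1/p}~dt=\int_0^\infty \vol(A_t)^{n/(n+1)}~dt$, while the left-hand side equals $\bigl(\int_V |f|^{(n+1)/n}~d\Omega\bigr)^{n/(n+1)}$.

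Rearranging yields the desired inequality. The only point worth checking carefully is the applicability of the Minkowski integral inequality to the (potentially non-$\sigma$-finite looking) double integral: since $f$ has compact support and is smooth, the integrand $\chi_{A_t}(x)$ is jointly measurable on $V\times(0,\infty)$ with finite integral over the product, so Fubini and Minkowski apply without issue. No step poses a genuine obstacle; the only conceptual content is recognizing the layer-cake/Minkowski trick, which is the same one used by Federer and Fleming in the classical Euclidean setting, and which transfers verbatim to the anisotropic case since it involves only the ambient measure $d\Omega$ and not the density $F$.
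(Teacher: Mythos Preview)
Your proof is correct. Both your argument and the paper's rest on the same core idea—Minkowski's inequality applied to the layer-cake decomposition of $|f|$—but the execution differs slightly. You invoke the integral form of Minkowski directly on the representation $|f|=\int_0^\infty \chi_{A_t}\,dt$, which gives the result in one line. The paper instead works with the truncations $f_t$ of $f$ at levels $\pm t$, defines $u(t)=\bigl(\int_V|f_t|^{(n+1)/n}\bigr)^{n/(n+1)}$, uses the \emph{finite} Minkowski (triangle) inequality together with $|f_{t+h}|\leq |f_t|+h\chi_{A_t}$ to bound $u(t+h)-u(t)\leq h\,\vol(A_t)^{n/(n+1)}$, and then recovers the estimate from $u(\infty)-u(0)=\int_0^\infty u'(t)\,dt$. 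Your route is cleaner if one is willing to quote the integral Minkowski inequality as a black box; the paper's route is more self-contained, using only the $L^p$ triangle inequality and the fundamental theorem of calculus for Lipschitz functions. In substance they are the same argument, and your remark that the trick is independent of the anisotropic data $F$ is exactly right.
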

\begin{proof} Let $f_t$ be the truncation of $f$ at levels $t$ and $-t$. The function $u_t:=\left(\displaystyle\int_V |f_t|^{\frac{n+1}{n}}\right)^{\frac{n}{n+1}}$ is increasing and Lipschitz continuous. Indeed it follows from  $|f_{t+h}|\leq |f_t|+h\chi_{A_t}\; (h>0)$ and Minkowski inequality that $u(t+h)-u(t)\leq h \vol(A_t)^{\frac{n}{n+1}}\leq h \cdot C$. From this last inequality it also follows that
\[\left(\int_V |f|^{\frac{n+1}{n}}\right)^{\frac{n}{n+1}}=\int_0^{\infty} u'(t)~dt\leq \int_0^{\infty} \vol(A_t)^{\frac{n}{n+1}}~dt.
\]
\end{proof}
 Putting together the previous Lemma and (\ref{ineq3}) one gets \emph{the anisotropic Sobolev inequality}
\begin{equation} \label {Soboin}\int_{V} h_{W_F}(-\grad_x |f|)~d\Omega\geq (n+1)\vol{(W_F)}^{\frac{1}{n+1}}\|f\|_{L^{(n+1)'}(\mathbb{R}^n)}.
\end{equation}
where $(n+1)'=\frac{n+1}{n}$. This inequality implies of course  the anisotropic Sobolev embedding
\[ W^{1,1}_F(V)\hookrightarrow L^{(n+1)'}(V,\Omega),
\]
where $W^{1,1}_F(V)$ is the completion of the normed vector space $C^{\infty}_c(V)$ with the norm:
\[ \|f\|_{W^{1,1}_F}:=\int_{V} \bar{F}^*(-d|f|)~d\Omega +\int_V |f|~d\Omega.
\]
Applying (\ref{Soboin}) to $f^v$ with $v=\frac{pn}{n+1-p}$, $p>1$ and using Holder's inequality  just like in \cite{FF}, page 488 one gets the anisotropic Gagliardo-Nirenberg inequality:
\[ \left(\int |f|^{p^*}\right)^{\frac{1}{p^*}}\leq C \left(\int |h_{W_F}(-\grad |f|)|^p\right)^{\frac{1}{p}},
\]
where $\displaystyle{\frac{1}{p^*}=\frac{1}{p}-\frac{1}{n+1}}$ and $C=\displaystyle{ \frac{pn}{(n+1-p)(n+1)}}\vol{(W_F)}^{-\frac{1}{n+1}}$. This induces the embedding $W^{1,p}_F\hookrightarrow L^{p^*}$.
\vspace{0.5cm}

\begin{remark} There exists a vast literature in which classical problems in Riemannian Geometry are transferred to the anisotropic world. For example, with the above coarea formula at hand and the anisotropic isoperimetric inequality one can give a very short proof of the P\'olya-Szeg\"o principle (Faber-Krahn inequality) in the anisotropic world in which one uses anisotropic symmetrization instead of the usual Steiner symmetrization for functions. All this is treated in great generality by Van Schaftingen in \cite{vS} with a precursor in \cite{AFLT}. One has to mention also the recent quantitative anisotropic isoperimetric  inequality of Figalli, Maggi and Pratelli \cite{FMP}. 
\end{remark}

\section{A tube formula and an anisotropic connection}\label{sec5}
 The proof of Theorem \ref{anisoMink} gives us "for free" an  anisotropic "half-tube" formula for hypersurfaces. We will restrict attention to those Wulff shapes $W_F$  for which the support function $h_{W_F}$ is smooth which implies the differentiability of the anisotropic  Gauss map $n:\Sigma\ra \partial W_F$. The hypersurfaces  $\Sigma$ considered below are $C^2$ and without boundary.
 
   \begin{definition}\label{secff} The (positively oriented) anisotropic shape operator of an oriented hypersurface $\Sigma$ is the bundle   endomorphism $S^{F}:T\Sigma\ra T\Sigma$, which at a point $b$ is defined by
 \[ S^{F}_b(v)=P_b(d_bn(v)),
 \]
 where $n:\Sigma\ra \partial W_F$ is the anisotropic Gauss map corresponding to the orientation and $P_b$ is the projection to $T_b\Sigma$ along the linear subspace determined by $n(b)$. The fundamental symmetric polynomials in the eigenvalues of $S^{\bar{F}}$ are denoted by $c_k(S^{F})$.
 \end{definition}  
 
 With this definition we  see that (\ref{jaco}) equals $|\det(1+sS^{F})|$ and for $s$ small one can forget about $|\cdot|$.  We therefore get
 \begin{theorem}  Let $\Sigma\subset V$ be a smooth hypersurface oriented using the normal that points to the unbounded component of $V\setminus \Sigma$. Let $\vol T^+_{\Sigma}({\varepsilon})$ be the volume of the set of points that lie in the unbounded component at an $\bar{F}$-distance at most $\varepsilon$ from $\Sigma$. Then for $\varepsilon$ sufficiently small the following holds:
 \[ \vol T^+_{\Sigma}({\varepsilon})=\sum_{k=0}^{n}\frac{\varepsilon^{k+1}}{k+1}\int_{\Sigma}c_k(S^{F})~dF.
 \] 
 \end{theorem}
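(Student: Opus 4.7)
The plan is to exploit the same oriented $C^1$ diffeomorphism $\phi:[0,\varepsilon]\times\Sigma\to T^+_\Sigma(\varepsilon)$, $\phi(b,s)=b+sn(b)$, that was already shown (in the proof of Theorem \ref{anisoMink}) to identify the half-tube with a product, and then apply Proposition \ref{cov} with the density $\lambda\times F$ on the product and $|\Omega|$ on $V$. This gives at once
\[
\vol T^+_\Sigma(\varepsilon)=\int_{[0,\varepsilon]\times\Sigma} J(d\phi;\lambda\times F,|\Omega|)\,d\lambda\otimes dF,
\]
so everything reduces to a pointwise computation of the jacobian at $(b,s)$ in terms of the anisotropic shape operator $S^F$ of Definition \ref{secff}, followed by an elementary $s$-integration.

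For the pointwise computation I pick an oriented basis $v_1,\dots,v_n$ of $T_b\Sigma$. Differentiating $\phi$ in the direction of $v_i$ at $(b,s)$ gives $v_i+s\,d_bn(v_i)$, and differentiating in $s$ gives $n(b)$. Using expression (\ref{jaco}) for the jacobian, I therefore need to evaluate
\[
n(b)\wedge\bigl(v_1+s\,d_bn(v_1)\bigr)\wedge\cdots\wedge\bigl(v_n+s\,d_bn(v_n)\bigr).
\]
The key observation is that wedging with $n(b)$ kills the component of each $d_bn(v_i)$ along $n(b)$, so each occurrence of $d_bn(v_i)$ can be replaced by its projection $P_b(d_bn(v_i))=S^F(v_i)\in T_b\Sigma$. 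Once all factors live in $T_b\Sigma$, the standard determinant-of-endomorphism identity in the top exterior power of $T_b\Sigma$ yields
\[
\bigwedge_{i=1}^{n}(v_i+sS^F(v_i))=\det(I+sS^F)\,v_1\wedge\cdots\wedge v_n=\Bigl(\sum_{k=0}^{n}c_k(S^F)s^k\Bigr)\,v_1\wedge\cdots\wedge v_n.
\]

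Since $n(b)\in\partial W_F$ means $\bar F(n(b))=1$, and since the hyperplane $T_b\Sigma$ is supporting for $W_F$ at $n(b)$, the multiplication property (\ref{multprop}) gives $|\Omega|(n(b)\wedge v_1\wedge\cdots\wedge v_n)=F(v_1\wedge\cdots\wedge v_n)$. Inserting this into (\ref{jaco}) I obtain
\[
J(d\phi;\lambda\times F,|\Omega|)_{(s,b)}=\sum_{k=0}^{n}c_k(S^F_b)\,s^k,
\]
which for $s$ small is positive so the absolute value is harmless. Plugging back into the change-of-variables formula, Fubini on $[0,\varepsilon]\times\Sigma$ splits the integral as $\sum_k\bigl(\int_0^\varepsilon s^k\,ds\bigr)\int_\Sigma c_k(S^F)\,dF$, and $\int_0^\varepsilon s^k\,ds=\varepsilon^{k+1}/(k+1)$ delivers exactly the stated formula.

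The only mildly delicate step is the replacement of $d_bn$ by $S^F$ inside the wedge (the justification is purely algebraic: the $n(b)$-component of each $d_bn(v_i)$ is annihilated by the leading $n(b)$-factor). The choice of the positive orientation on $n(b)\wedge T_b\Sigma$ and the fact that $n$ is the outward anisotropic Gauss map ensure positivity of $\det(I+sS^F)$ for $\varepsilon$ small, matching the conventions of Definition \ref{secff}. Everything else is the routine expansion used in Theorem \ref{anisoMink}.
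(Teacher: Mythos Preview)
Your proposal is correct and follows essentially the same approach as the paper: the paper states just before the theorem that, with the definition of $S^F$, expression (\ref{jaco}) equals $|\det(1+sS^{F})|$ and that for small $s$ one can drop the absolute value; the formula then follows immediately from the change of variables established in the proof of Theorem \ref{anisoMink}. Your write-up simply makes explicit the algebraic step the paper leaves implicit, namely that the $n(b)$-factor in the wedge annihilates the normal component of each $d_bn(v_i)$, so one may replace $d_bn$ by $S^F$ and read off $\det(I+sS^F)$ via the multiplication property (\ref{multprop}).
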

 \begin{corollary} 
 \[ \Area_F(\partial W_F)=(n+1)\vol(W_F).
 \]
  \end{corollary}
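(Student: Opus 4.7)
The plan is to apply the tube formula of the preceding theorem to $\Sigma = \partial W_F$ itself, exploiting the fact that on the boundary of the Wulff shape the anisotropic shape operator is trivial. Since $h_{W_F}$ is smooth, $W_F$ is strictly convex with $C^2$ boundary, and the tangent hyperplane $T_b(\partial W_F)$ at every $b \in \partial W_F$ is the unique supporting hyperplane of $W_F$ at $b$. The uniqueness and orientation clauses in the definition of the anisotropic Gauss map then force $n(b) = b$, so $n$ is the identity map of $\partial W_F$. Consequently $d_b n = \id_{T_b\Sigma}$, the projection $P_b$ acts as the identity on $T_b\Sigma$, and $S^F \equiv \id$. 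In particular $c_k(S^F) = \binom{n}{k}$ is constant on $\partial W_F$.

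Next, I would compute $\vol T^+_{\partial W_F}(\varepsilon)$ directly. By subadditivity and positive homogeneity of $\bar{F}$, for $p$ outside $W_F$ one has $\inf_{q \in \partial W_F} \bar{F}(p-q) = \bar{F}(p) - 1$ (the infimum is attained along the radial line $q_0 = p/\bar{F}(p)$, and the lower bound follows from $\bar{F}(p) \le \bar{F}(q) + \bar{F}(p-q)$ with $q \in W_F$). Hence
\[ T^+_{\partial W_F}(\varepsilon) = (1+\varepsilon)W_F \setminus W_F. \]
Since the volume form scales homogeneously of degree $n+1$ under dilations,
\[ \vol T^+_{\partial W_F}(\varepsilon) = \left[(1+\varepsilon)^{n+1} - 1\right]\vol(W_F). \]

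Substituting into the tube formula and comparing the coefficient of $\varepsilon$ on both sides (equivalently, differentiating at $\varepsilon = 0$), the left-hand side contributes $(n+1)\vol(W_F)$, while the right-hand side contributes $\int_{\partial W_F} c_0(S^F)\, dF = \Area_F(\partial W_F)$. This yields the corollary. The only mild obstacle is verifying the orientation convention behind the identification $n(b) = b$, which is a routine check against the outward-normal-first convention used to orient $\partial W_F$ as the boundary of a bounded domain.

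One could alternatively bypass the tube formula entirely and appeal directly to Theorem~\ref{anisoMink} with $B = W_F$: the identity $W_F + tW_F = (1+t)W_F$ gives $\vol(W_F + tW_F) - \vol(W_F) = [(1+t)^{n+1}-1]\vol(W_F)$, so $\SM_F(W_F) = (n+1)\vol(W_F)$, and the theorem equates this with $\int_{\partial W_F}\, dF = \Area_F(\partial W_F)$.
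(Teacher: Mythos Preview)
Your proof is correct and follows essentially the same route as the paper: apply the half-tube formula to $\Sigma=\partial W_F$, observe that the anisotropic Gauss map is the identity so $S^F=\id$, and compute the tube volume directly from the homogeneity $(1+\varepsilon)W_F\setminus W_F$. The only cosmetic difference is that the paper matches the full polynomial in $\varepsilon$ (writing the right-hand side as $\Area_F(\partial W_F)\int_0^\varepsilon (1+s)^n\,ds=\Area_F(\partial W_F)\cdot\frac{(1+\varepsilon)^{n+1}-1}{n+1}$) rather than just the linear coefficient, but this leads to the same conclusion. Your alternative via Theorem~\ref{anisoMink} is also valid and arguably more direct.
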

\begin{proof}
 Take $\Sigma=W_F$ oriented via the exterior anisotropic normal. Then $S_b=\id_{T_{b}\Sigma}$ and
 \[((1+\varepsilon)^{n+1}-1) \vol(W_F)= \vol T^+_{W_F}({\varepsilon})=\Area_F(\partial W_F)\int_0^{\epsilon}(1+s)^n.
 \]
 \end{proof}
  To get the full tube formula we need to consider the negatively oriented anisotropic Gauss map $n^-:\Sigma\ra \partial W_F$ which associates to every $b\in\Sigma$ the unique vector $n^-(b)\in \partial W_F$ such that $T_{n^-(b)}\partial W_F=T_b\Sigma$ and $n^-(b)\wedge \orient \Sigma=-\orient V$. The negatively oriented shape operator $S^{F}_-$ is defined analogously to  \ref{secff}.  Notice that when $\bar{F}$ is reversible, i.e. $\bar{F}(v)=\bar{F}(-v)$, one has the relation
 \[ S^{F}_-=-S^{F}.
 \]

 \begin{corollary} Let $\Sigma\subset V$ be a smooth hypersurface as above and let $T_{\varepsilon}=\Sigma +\varepsilon W_F$. Then for  $\varepsilon$ sufficiently small the following holds:
 \[ \vol(T_{\epsilon})=\sum_{k=0}^{n}\frac{\varepsilon^{k+1}}{k+1}\int_{\Sigma}c_k(S^{F})+c_k(S^{F}_-)~dF.
 \]
  \end{corollary}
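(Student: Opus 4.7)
The strategy is to split the tube $T_\varepsilon = \Sigma + \varepsilon W_F$ into two half-tubes, one on each side of $\Sigma$, and apply the previous theorem together with its mirror image using $n^-$ to each half. Write $T^+_\Sigma(\varepsilon)$ for the half-tube already handled by Theorem 5.2, and let $T^-_\Sigma(\varepsilon)$ denote the set of points on the opposite side of $\Sigma$ at $\bar F$-distance at most $\varepsilon$. The first task is to check that, for $\varepsilon$ below a certain anisotropic injectivity radius of $\Sigma$,
\[
T_\varepsilon = T^+_\Sigma(\varepsilon) \cup T^-_\Sigma(\varepsilon),
\]
with intersection equal to $\Sigma$, which is $\mathcal{H}^{n+1}$-negligible. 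The inclusion $\supset$ is immediate from $W_F$ being star-shaped at the origin together with $n^{\pm}(b) \in \partial W_F$; for $\subset$, compactness of $\Sigma$ and smoothness of $h_{W_F}$ make the maps $\phi^{\pm}(s,b) = b + s\,n^{\pm}(b)$ into oriented $C^1$-diffeomorphisms onto their images for $\varepsilon$ small, so every $p \in T_\varepsilon$ has a unique anisotropic foot on $\Sigma$ realized through one of $\phi^+$ or $\phi^-$.

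For the negative half I would transcribe the proof of Theorem 5.2 with $n^-$ in place of $n$. The only genuinely new computation is the jacobian of $\phi^-$. Decomposing $d_b n^-(v_i) = S^F_-(v_i) + \alpha_i\, n^-(b)$ and noting that the $n^-(b)$ components drop out against the leading factor $n^-(b)$ in the wedge, the multiplication property \eqref{multprop} combined with $\bar F(n^-(b)) = 1$ yields
\[
J(d\phi^-; \lambda \times F, |\Omega|)_{(s,b)} = \det(I + s\, S^F_-) = \sum_{k=0}^{n} s^k\, c_k(S^F_-),
\]
positive for $s$ small. Integrating along $s$ via Proposition \ref{cov} gives
\[
\vol(T^-_\Sigma(\varepsilon)) = \sum_{k=0}^{n} \frac{\varepsilon^{k+1}}{k+1} \int_\Sigma c_k(S^F_-)\, dF.
\]

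Adding this to the formula for $\vol(T^+_\Sigma(\varepsilon))$ supplied by Theorem 5.2 and invoking the decomposition above finishes the proof. I expect the only step of real substance to be the clean identification of $T_\varepsilon$ with the union of the two half-tubes, which is the anisotropic analogue of the usual tubular neighborhood theorem; under the standing smoothness assumption on $h_{W_F}$ this should be no harder than its Riemannian counterpart, since both $n$ and $n^-$ are then as regular as needed for the inverse function theorem to produce an anisotropic injectivity radius uniform over the compact $\Sigma$.
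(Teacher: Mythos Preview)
Your proposal is correct and follows precisely the route the paper intends: the paper does not spell out a proof of this corollary, but sets it up by introducing $n^-$ and $S^F_-$ immediately beforehand, leaving the reader to repeat the half-tube argument of Theorem~5.2 on the other side and add. Your write-up fills in exactly those details (the decomposition $T_\varepsilon = T^+_\Sigma(\varepsilon)\cup T^-_\Sigma(\varepsilon)$, the jacobian computation for $\phi^-$, and the summation), so there is nothing to add.
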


  Inspired by this formula we introduce

  \begin{definition} The anisotropic connection is the differential operator $\Gamma(T\Sigma)\times\Gamma(V)\ra \Gamma(T\Sigma):$
  \[ \nabla^{F}_{Y}(X)=P^{n}_{T\Sigma}(dX(Y)).
  \]
  where $P^n_{T\Sigma}$ is the projection along $\langle n\rangle$ onto $T\Sigma$. This is a true connection on $\Sigma$ when restricted to tangent vector fields $X:\Sigma\ra T\Sigma$.
  
  The anisotropic divergence along the hypersurface $\Sigma$ of a vector field $X$  is:
  \[ \diver^{F}_\Sigma X:=\tr \{Y\ra \nabla^F_YX\}.
  \]
  \end{definition}
  
  We will consider the functional:
  \[       \mathscr{F}(\Sigma):= \int_\Sigma~dF
  \]
  and look at its first variation
 \begin{theorem}\label{firstvar} Let $X$ be a vector field on $V$. Then 
 \[ \frac{\partial \mathscr{F}}{\partial X}(\Sigma)=\int_{\Sigma} \diver^F_{\Sigma}(X)~dF.
 \] 
 \end{theorem}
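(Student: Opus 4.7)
The plan is to parametrize the deformation by the flow $\phi_t$ of $X$, so that $\Sigma_t := \phi_t(\Sigma)$. In a local oriented parametrization $\alpha:U\subset\bR^n\ra\Sigma$ with frame $e_i := d\alpha(\partial_i)$, Proposition~\ref{cov} combined with the fact that $F$, being a density on the vector space $V$, does not depend on the base point, gives
$$\mathscr{F}(\Sigma_t) = \int_U F\bigl(d\phi_t(e_1)\wedge\cdots\wedge d\phi_t(e_n)\bigr)\,du.$$
Smoothness of $h_{W_F}$ (assumed throughout Section~\ref{sec5}) and of the flow makes the integrand smooth in $t$, so differentiation passes under the integral and the problem is reduced to computing the pointwise derivative $dF_{\eta_0}(\dot\eta)$, where $\eta_0 = e_1\wedge\cdots\wedge e_n$ and $\dot\eta = \sum_i e_1\wedge\cdots\wedge dX(e_i)\wedge\cdots\wedge e_n$.

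The algebraic heart of the argument is an explicit formula for the differential of $F$ on simple $n$-vectors. Applying the multiplication property~\eqref{multprop} to $w = n(\eta)$, the unit anisotropic normal to the hyperplane spanned by $\eta$, one gets $F(\eta) = \Omega(n(\eta)\wedge \eta)$ for every positively oriented simple $\eta$. Because the support function is smooth, $\eta\mto n(\eta)$ is smooth on the cone of simple $n$-vectors, and hence
$$dF_\eta(\dot\eta) = \Omega(\dot n(\eta)\wedge \eta) + \Omega(n(\eta)\wedge \dot\eta).$$
The first term vanishes: since $n(\eta)\in\partial W_F$ for all $\eta$, the vector $\dot n(\eta)$ lies in $T_{n(\eta)}\partial W_F$, and by the very definition of anisotropic normal, this tangent space coincides with the supporting hyperplane, i.e.\ with the hyperplane spanned by $\eta$. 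Writing $\eta = v_1\wedge\cdots\wedge v_n$, this forces $\dot n(\eta)\in\langle v_1,\ldots,v_n\rangle$ and therefore $\dot n(\eta)\wedge \eta = 0$. The resulting clean identity $dF_\eta(\dot\eta) = \Omega(n(\eta)\wedge \dot\eta)$ is the step I expect to be the main obstacle; everything else is formal once it is in hand.

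Substituting at $\eta_0 = e_1\wedge\cdots\wedge e_n$, decompose each $dX(e_i) = \nabla^F_{e_i}X + \alpha_i\, n(b)$ via the projection $P^n_{T\Sigma}$ that defines the anisotropic connection. The $\alpha_i\, n(b)$ contributions give zero through repetition of $n(b)$ in the form $\Omega(n(b)\wedge e_1\wedge\cdots\wedge n(b)\wedge\cdots\wedge e_n)$, so only the tangential parts survive:
$$dF_{\eta_0}(\dot\eta) = \sum_i \Omega\bigl(n(b)\wedge e_1\wedge\cdots\wedge \nabla^F_{e_i}X\wedge\cdots\wedge e_n\bigr).$$
Applying the standard trace identity in $\Lambda^n T_b\Sigma$ to the operator $A: T_b\Sigma\ra T_b\Sigma$, $Y\mto \nabla^F_Y X$, namely
$$\sum_i e_1\wedge\cdots\wedge\nabla^F_{e_i}X\wedge\cdots\wedge e_n = \tr(A)\,e_1\wedge\cdots\wedge e_n,$$
and using $\tr(A) = \diver^F_\Sigma X$ together with $\Omega(n(b)\wedge \eta_0) = F(\eta_0)$, we obtain $dF_{\eta_0}(\dot\eta) = \diver^F_\Sigma(X)\cdot F(\eta_0)$. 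Integrating over $\Sigma$ gives the claim; as usual one assumes either $\Sigma$ compact or $X$ of compact support to justify differentiation under the integral and finiteness of the quantities involved.
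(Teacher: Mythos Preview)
Your argument is correct and follows the same overall architecture as the paper's proof: pull the integral back via the flow, reduce to computing $dF_{\eta_0}(\dot\eta)$ with $\dot\eta=\sum_i e_1\wedge\cdots\wedge dX(e_i)\wedge\cdots\wedge e_n$, obtain the key identity $dF_\eta(\dot\eta)=\Omega(n(\eta)\wedge\dot\eta)$, and then finish by decomposing $dX(e_i)$ along $n$ and $T_b\Sigma$ and recognising the trace.

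The one genuine difference is how you obtain the identity $dF_\eta(\dot\eta)=\Omega(n(\eta)\wedge\dot\eta)$. The paper introduces an auxiliary inner product, factors $F=h\circ A$ through the Hodge isomorphism $A$ and the support function $h$ (the diagram after~\eqref{eq3}), and then uses that the gradient of the $1$-homogeneous function $h$ is constant along rays and equals the anisotropic normal, so $DF_\eta(\cdot)=\langle n, A(\cdot)\rangle=\Omega(n\wedge\cdot)$. You instead differentiate the relation $F(\eta)=\Omega(n(\eta)\wedge\eta)$ coming from~\eqref{multprop} and kill the extra term $\Omega(\dot n\wedge\eta)$ by the geometric observation that $\dot n\in T_{n(\eta)}\partial W_F=\Span(\eta)$. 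Both routes are short; yours has the advantage of not invoking an inner product at all, which is in line with the philosophy the paper announces in the introduction, while the paper's route makes the link $\nabla h=n$ explicit and may be more familiar to readers used to support-function calculus. The endgame (splitting off the $n$-component of $dX(e_i)$ and reading the trace via~\eqref{multprop}) is identical in both proofs.
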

\begin{proof} We have to compute:
\[ \frac{d}{dt}\biggr|_{t=0} \int_{\Sigma_t} ~dF=\int_{\Sigma} d\lim_{t \ra 0}\frac{\phi^*_t(F)-F}{t}.
\]
 where $\phi:(-\epsilon,\epsilon)\times V\ra V$ is the flow generated by $X$. We will let $v_1,\ldots, v_{n-1}$ be a basis of $T_p\Sigma$ with dual basis $v_1^*,\ldots, v_{n-1}^*$ and such that $F(v_1\wedge\ldots\wedge v_{n-1})=1$. If the limit density
 \[ \omega :=\lim_{t\ra 0} \frac{\phi_t^*F-F}{t} \]
 is determined then the jacobian formula is saying that
\[ \int_{\Sigma} d\omega=\int_{\Sigma}\frac{\omega (v_1\wedge\ldots \wedge v_{n-1})}{F(v_1\wedge\ldots \wedge v_{n-1})}~dF.\]
 
  We therefore need to compute $\omega(v_1\wedge\ldots\wedge v_{n-1})$. We have:
 \begin{equation}\label{eq3} \frac{d}{dt}\biggr|_{t=0} F(\Lambda^{n-1}d\phi_t(v_1\wedge\ldots \wedge v_{n-1}))=DF_{v_1\wedge\ldots \wedge v_{n-1}}\left(\frac{d}{dt}\biggr|_{t=0}d\phi_t(v_1)\wedge\ldots \wedge d\phi_t(v_{n-1})\right)
 \end{equation}
 We obviously have 
 \[\frac{d}{dt} d_p\phi_t(v_i)\bigr|_{t=0}=d_pX(v_i).
 \]
 Consequently we get that (\ref{eq3}) equals
 \[DF_{v_1\wedge\ldots \wedge v_{n-1}}\left(\sum_{i=1}^{n-1} v_1\wedge \ldots \wedge dX(v_i)\wedge \ldots \wedge v_{n-1} \right)
 \]
 We recall (see (\ref{Fh}) and Proposition \ref{altcodens}) that in the presence of an inner product the following commutative diagram holds:
 \[ \xymatrix{ \Lambda^{n-1} V \ar[rr]^{A} \ar[dr]_{F} & & V\ar[ld]^{ h} \\ & \bR &}
 \]
 where $h$ is the support function of the Wulff shape $W_F$ and $A$ is the Hodge isomorphism with $w:=A(v_1\wedge\ldots \wedge v_{n-1})$ uniquely identified by the relation:
 \[ \langle w,x\rangle=\Omega(x\wedge v_1\ldots \wedge v_{n-1})\quad\quad \forall x\in V.
 \]
 In other words, $A(v_1\wedge\ldots\wedge v_{n-1})$ is a positive multiple of the oriented unit normal (with respect to the inner product) to the hyperplane spanned by $v_1,\ldots, v_{n-1} $.  Since $A$ is linear we get
 \[  DF_{v_1\wedge\ldots \wedge v_{n-1}}( v_1\wedge \ldots \wedge dX(v_i)\wedge \ldots \wedge v_{n-1})=Dh_{A(v_1\wedge\ldots \wedge v_{n-1})}(A(v_1\wedge\ldots \wedge dX(v_i)\wedge\ldots \wedge v_{n-1}))
 \]
 Now, since $h$ is $1$-homogeneous the gradient of $h$ along any ray is constant and it coincides with the anisotropic normal $n$ to the orthogonal hyperplane of the ray (Corollary 1.7.3 in \cite{Sc}). 
 
 It follows that
 \[ Dh_{A(v_1\wedge\ldots \wedge v_{n-1})}(A(v_1\wedge\ldots \wedge dX(v_i)\wedge\ldots \wedge v_{n-1}))=\langle n, A(v_1\wedge \ldots dX(v_i)\wedge\ldots \wedge v_{n-1} ) \rangle=\]\[=\Omega(n\wedge v_1\wedge\ldots dX(v_i)\wedge \ldots v_{n-1})=v_i^*(P^n_{T\Sigma}(dX(v_i))).
\]
To see why the last equality holds just write $dX(v_i)=an+\sum_j b_j v_j$ and notice that both sides equal $b_i$. One  makes use at this point of the multiplication property (\ref{multprop}) and of $F(v_1\wedge\ldots \wedge v_{n-1})=1$. Finally, notice that by definition
\[ \sum_{i=1}^{n-1}v_i^*(P^n_{T\Sigma}(dX(v_i)))=\tr \nabla^{F}X=\diver_{\Sigma}^F(X).
\]
\end{proof}
Note that according to our definitions if $X=\psi n$ then 
\[ \diver_{\Sigma}^F{X}=\psi\tr S^F.
\]
This result appears also in \cite{An}, Sec. 2.6. It justifies why it is appropriate to call  $\tr S^F$ or $\frac{1}{n}\tr S^F$ the anisotropic \emph{ mean curvature}.
 It turns out that just like in the Riemannian case the variations in the tangent directions do not matter since we have a variant of the divergence theorem,  consequence of the following form of Stokes Theorem.
\begin{theorem}[Stokes] Let $M$ be an $n$-dimensional oriented $C^1$ manifold with boundary $\partial M$. Let $\omega$ be a top degree differential form and $X$ a vector field. Then
\[ \int_M L_X\omega=\int_{\partial M} \iota_X\omega,
\] 
where $L_X$ is the Lie derivative and $\iota_X$ is the contraction.
\end{theorem}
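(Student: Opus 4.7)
The plan is to derive this from the classical Stokes theorem together with Cartan's magic formula. Recall that for any vector field $X$ and differential form $\omega$ of any degree one has the identity
\[ L_X\omega = d\iota_X\omega + \iota_X d\omega. \]
Since $\omega$ is assumed to be of top degree $n$ on an $n$-dimensional manifold, the form $d\omega$ lives in $\Lambda^{n+1}T^*M$ and is therefore identically zero. Consequently $L_X\omega = d\iota_X\omega$, which reduces the claimed identity to
\[ \int_M d\iota_X\omega = \int_{\partial M} \iota_X\omega. \]
This is exactly the content of the classical Stokes theorem for the $(n-1)$-form $\iota_X\omega$, applied with the induced boundary orientation on $\partial M$, so the conclusion follows immediately.

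The only genuinely nontrivial step is invoking Cartan's formula in a $C^1$ setting, which is fine because $\iota_X\omega$ is a $C^1$ form whenever $X$ and $\omega$ are $C^1$; the Lie derivative $L_X\omega$, defined as the derivative at $t=0$ of the pullback $\phi_t^*\omega$ along the flow of $X$, is then continuous and the identity $L_X\omega = d\iota_X\omega$ holds pointwise. I expect no real obstacle, since both Cartan's formula and Stokes' theorem in this regularity are standard; the only thing worth flagging is that no symmetry or inner-product hypothesis on $\omega$ or $X$ is needed, and that the orientation convention on $\partial M$ is the usual induced one, consistent with the "outer normal first" convention used earlier in the paper so as to match the anisotropic area conventions from Section \ref{sec5}.
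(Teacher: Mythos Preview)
Your proof is correct and matches the paper's own argument essentially verbatim: apply Cartan's homotopy formula to the top-degree form $\omega$ to obtain $L_X\omega = d(\iota_X\omega)$, then invoke the classical Stokes theorem for the $(n-1)$-form $\iota_X\omega$. Your additional remarks on $C^1$ regularity and orientation conventions are reasonable elaborations but not needed beyond what the paper states.
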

\begin{proof} Use the standard Stokes theorem for the $n-1$ form $\iota_X(\omega)$ together with the relation:
\[ d(\iota_x(\omega))=L_X\omega,
\]
which is nothing else but Cartan's homotopy formula for $n$-forms.
\end{proof}
\begin{remark} The standard Stokes follows from the above formulation by noting that each degree $n-1$ form $\eta$ can be written as $\iota_{X}\omega$ for some $n$-form $\omega$ and a vector field $X$. For example, if $\omega$ is the volume form induced by a Riemannian metric then $X$ can be taken to be the (metric) dual to the Hodge dual of $\eta$.
\end{remark}

We can rewrite the above result as a divergence theorem as follows. Let $\diver^{\omega}{X}$ the divergence of a vector field with respect to a differential form be defined at the points where $\omega$ does not vanish by $L_{X}\omega=\diver^{\omega}{X}\cdot \omega$. On all other points, $\diver^{\omega}{X}$ can be defined arbitrarily. Let $\eta$ be a \emph{non-vanishing} differential form of degree $n-1$ on $\partial M$. Then
\[ \int_{M}\diver^{\omega}(X)\cdot\omega=\int_{\partial M} \omega(X\wedge \ovra{\eta})\cdot\eta, \]
where $\ovra{\eta}$ is the dual to $\eta$ on $\partial M$. This is because $\iota_X\omega=\omega(X\wedge \ovra{\eta})\cdot\eta$. Notice that this formula can be applied with whatever choice of the definition of volume one prefers on $M$ and on $\partial M$. It is easy to see that in the Riemannian case when $\omega=\dvol_M$ and $\eta=\dvol_{\partial M}$ one has $\omega(X\wedge \ovra{\eta})=\langle X,\nu_{\partial M}\rangle$.

\begin{corollary}\label{vfdecomp} Let $X=\psi n+X^{\top}$ be the decomposition of  the variation vector into its anisotropic normal and its tangent components. Then
\[ \frac{\partial \mathscr{F}}{\partial X}(\Sigma)=\int_{\Sigma}\psi\tr{S^F}~dF.
\]
\end{corollary}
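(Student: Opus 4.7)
The plan is to start from Theorem \ref{firstvar}, which rewrites the first variation as $\int_\Sigma \diver^F_\Sigma(X)\,dF$. Using the $\mathbb{R}$-linearity of $\diver^F_\Sigma$ in its argument, I would split
\[
\diver^F_\Sigma(X)=\diver^F_\Sigma(\psi n)+\diver^F_\Sigma(X^\top).
\]
The normal piece is already disposed of by the identity $\diver^F_\Sigma(\psi n)=\psi\tr S^F$ stated immediately before the corollary. So the entire statement reduces to showing that $\int_\Sigma \diver^F_\Sigma(X^\top)\,dF=0$ whenever $X^\top$ is tangent to $\Sigma$ along $\Sigma$.

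The first thing I would record is that $\diver^F_\Sigma(Y)$ is local in $Y|_\Sigma$: by definition $\nabla^F_v Y=P^n_{T\Sigma}(dY(v))$ for $v\in T_p\Sigma$, and $dY(v)$ is obtained by differentiating $Y$ along a curve inside $\Sigma$, so only the restriction of $Y$ to $\Sigma$ enters the trace. Consequently I am free to replace $X^\top$ (which a priori lives only on $\Sigma$) by any convenient extension to a vector field on $V$.

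Next I would invoke the tubular neighborhood theorem for the $C^2$ hypersurface $\Sigma$ to extend $X^\top$ to a vector field $\widetilde{X}^\top$ defined on a neighborhood $U\simeq \Sigma\times(-\varepsilon,\varepsilon)$ of $\Sigma$ in such a way that $\widetilde{X}^\top$ is everywhere tangent to the parallel hypersurfaces $\Sigma\times\{s\}$, and in particular to $\Sigma$ itself; a cutoff then globalizes the extension. The flow $\phi_t$ of $\widetilde{X}^\top$ therefore preserves $\Sigma$ for small $|t|$, so $\phi_t(\Sigma)=\Sigma$ as oriented submanifolds and $\mathscr F(\phi_t(\Sigma))=\mathscr F(\Sigma)$. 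Differentiating at $t=0$ and using Theorem \ref{firstvar} yields $\int_\Sigma \diver^F_\Sigma(\widetilde{X}^\top)\,dF=0$, and by the locality observation this integral coincides with $\int_\Sigma \diver^F_\Sigma(X^\top)\,dF$.

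The main technical obstacle I anticipate is justifying this locality claim cleanly, because the projection $P^n_{T\Sigma}$ is along the \emph{anisotropic} normal $n$ rather than along a Euclidean normal, and the trace of a composition involving a non-orthogonal projection can in general be sensitive to the choice of complement; the point is that after fixing a tangent frame at $p$ only the values $dY_p(v_i)$ with $v_i\in T_p\Sigma$ enter, and these are intrinsic to $Y|_\Sigma$. As an alternative route, one could identify $dF$ with the $(n-1)$-form $\omega_F$ on $\Sigma$ encoded by $F$ and the orientation, and then apply the Stokes/divergence theorem recalled just above the corollary to $\iota_{X^\top}\omega_F$ (using $\partial\Sigma=\emptyset$); that approach requires the subsidiary identity $L_{X^\top}\omega_F=\diver^F_\Sigma(X^\top)\,\omega_F$, which can be extracted by a computation parallel to that in the proof of Theorem \ref{firstvar}.
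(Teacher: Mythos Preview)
Your proposal is correct, and your \emph{alternative} route is exactly what the paper does, in one line: the density $\omega$ from the proof of Theorem~\ref{firstvar} is $L_X F$ restricted to $\Sigma$, so for the tangential part $\int_\Sigma \diver^F_\Sigma(X^\top)\,dF=\int_\Sigma L_{X^\top}F$, and this vanishes by the Stokes/divergence theorem stated just before the corollary (since $\Sigma$ is closed). Your primary route---extending $X^\top$ to a field tangent to the parallel hypersurfaces so that its flow preserves $\Sigma$, and then invoking Theorem~\ref{firstvar} a second time---also works but is a detour: you are effectively re-proving a special case of Stokes by appealing to the first-variation formula again. The locality concern you raise is not a genuine obstacle (the complement $\langle n\rangle$ is fixed by the anisotropic data, not a choice, and as you yourself note $\nabla^F_v Y$ for $v\in T_p\Sigma$ depends only on $Y|_\Sigma$), and the identity $L_{X^\top}(F|_\Sigma)=\diver^F_\Sigma(X^\top)\cdot F|_\Sigma$ that your alternative needs is already contained in the computation carried out in the proof of Theorem~\ref{firstvar}, once one observes that for a field tangent to $\Sigma$ the ambient Lie derivative of $F$ restricts to the intrinsic one.
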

\begin{proof} The density $\omega$ from the proof of Theorem \ref{firstvar} is in fact $L_XF$ restricted to $\Sigma$ and $\int_{\Sigma} L_{X^{\top}}F$ vanishes due to the divergence theorem since $\Sigma$ is closed.
\end{proof}
\begin{remark} We take a look at the computations made in \cite{P} with our notations. Let us first notice that as a consequence of the $1$- homogeneity of the support function $h$ the following relation on the unit sphere $S^{n}\subset V$ holds in the presence of an inner product $V$
\[ n(x)=h(x)\cdot x +\nabla_x (h\bigr|_{S^{n}}).
\]
The image of this map determines  the Wulff shape. Moreover, it delivers the following relation between the anisotropic Gauss map and the usual Gauss map of a hypersurface $\Sigma$.
\[ n=h(\nu)\cdot \nu+\nabla_{\nu}(h\bigr|_{S^{n}}).
\]
It is not hard to check that the first variation in the direction of a vector field $X=\psi \cdot n=\psi h(\nu) \cdot  \nu+\psi\nabla_{\nu}(h\bigr|_{S^{n-1}})$ is, according to Palmer equal to
\[ \int_{\Sigma} \psi h(\nu) \tr[ P^{\nu}_{T\Sigma}(dn(\cdot))]~d\mathcal{H}=\int_{\Sigma} \psi \tr[ P^{\nu}_{T\Sigma}(dn(\cdot))]~dF,
\]
where $P^{\nu}_{T\Sigma}$ is the \emph{orthogonal projection} along $\nu$ onto $T\Sigma$. Comparing this with Corollary \ref{vfdecomp} we conclude that
\[ \tr[ P^{\nu}_{T\Sigma}(dn(\cdot))]=\tr[ P^{n}_{T\Sigma}(dn(\cdot))],
\]
must hold at every point.
\end{remark}


\begin{thebibliography}{99}
\bibitem{AFLT} A. Alvino, V. Ferone, G. Trombetti, P-L. Lions, \emph{Ann. Inst. H. PoincarŽ Anal. Non LinŽaire}, {\bf 14 (2)}\; (1997), 275-293.
\bibitem{AK} L. Ambrosio, B. Kirchheim, \emph{Rectifiable sets in metric and Banach spaces}, Math. Ann. {\bf 318} \;(2000), 527-555.
\bibitem{AT} J.C. \'Alvarez Paiva,  A.C. Thompson, \emph{Volumes on normed and Finsler spaces}, Math. Sci. Res. Inst. Publ. {\bf 50}, Cambridge Univ. Press, 2004.
\bibitem{An} B. Andrews, \emph{ Volume-preserving anisotropic mean curvature flows},  Indiana Univ. Math. J., {\bf 50 (2)} \;(2001), 783-827.
\bibitem{CLL} A. Chambolle, S. Lisini, L. Lussardi, \emph{A remark on the anisotropic outer Minkowski content}, \url{http://arxiv.org/abs/1203.5190}.
\bibitem{CENV} D. Cordero-Erausquin, B. Nazaret, C. Villani, \emph{A mass transportation approach to sharp Sobolev and Gagliardo-Nirenberg inequalities}, Adv. Math. {\bf 182}\; (2004), 307-332.
\bibitem{EG} L. Evans, R. Gariepy, \emph{Measure Theory and Fine Properties of Functions}, CRC Press, 1992.
\bibitem{F} H. Federer, \emph{Geometric Measure Theory}, Grundlehren der mathematischen Wissenschaften, Springer-Verlag, New York, 1969.
\bibitem{Fe} H. Federer, \emph{Curvature Measures}, Trans. Amer. Math. Soc. {\bf 93}\; (1959), 418-491.
\bibitem{FF} H. Federer, W. Fleming, \emph{Normal and Integral Currents}, Ann. Math. {\bf 72 (3)}\; (1960), 458-520.
\bibitem{FMP} A. Figalli, F. Maggi, A. Pratelli, \emph{A mass transportation approach to quantitative isoperimetric inequalities}, Invent. Math., {\bf 182}\; (2010), no. 1, 167-211.
\bibitem{GHV} W. Greub, S. Halperin, R. Vanstone, \emph{Connections, Curvature and Cohomology}, vol. 1, Academic Press, 1972.
\bibitem{K} B. Kirchheim, \emph{Rectifiable metric spaces: local structure and regularity of the Hausdorff measure}, Proc. AMS {\bf 121(1)} \; (1994), 113-123.
\bibitem{PK} M. Koiso, B. Palmer, \emph{Geometry and stability of surfaces with constant anisotropic mean curvature},  Indiana Univ. Math. J., {\bf 54 (6)}\; 2005, 1817-1852.
\bibitem{MS} V. Milman,  G. Schechtman, \emph{Asymptotic Theory of Finite Dimensional Normed Spaces}, Lecture Notes in Mathematics 1200,  Springer-Verlag, Berlin, 1986.
\bibitem{P} B. Palmer, \emph{Stability of the Wulff shape}, Proc. AMS {\bf 126 (12)}\; 1998,  3661-3667.

\bibitem{Sc} R. Schneider, \emph{Convex Bodies: The Brunn-Minkowski Theory}, Encyclopedia of Mathematics and its Applications 44, Cambridge University Press, 1993.
\bibitem{Shen} Z. Shen, \emph{Lectures on Finsler Geometry}, World Scientific Publishing Co., Singapore 2001.
\bibitem{Si} L. Simon, \emph{Lectures on Geometric Measure Theory}, Proc. of the Centre for Math. Anal., ANU, vol. 3, 1983.
\bibitem{Ta} J. Taylor, \emph{Crystalline Variational Problems}, Bull. AMS. {\bf 84}\;(1978), 568-588.
\bibitem{vS} J. van Schaftingen, \emph{Anisotropic symmetrization},  Ann. Inst. H. PoincarŽ Anal. Non LinŽaire {\bf 23} \;(2006), 539-565.
\end{thebibliography}
\end{document}